\newenvironment{items}
{\begin{enumerate}[topsep=3pt, itemsep=3pt, parsep=0pt, label=(\roman*)]}
{\end{enumerate}}
\renewcommand{\tilde}{\widetilde}
\newcommand{\dual}[1]{#1^{\vee}}
\newcommand{\A}{{\mathscr A}}
\renewcommand{\H}{{\mathscr H}}
\renewcommand{\L}{{\mathscr L}}
\newcommand{\M}{{\mathscr M}}
\newcommand{\Y}{{\mathscr Y}}
\newcommand{\N}{{\mathscr N}}
\renewcommand{\P}{{\mathscr P}}
\newcommand{\rr}{{\mathbb R}}
\renewcommand{\O}{{\mathscr O}}
\newcommand{\cC}{{\mathscr C}}
\newcommand{\st}{\mathrel{\mid}}
\newcommand{\ev}{\mathop{\rm ev}\nolimits}
\newcommand{\Mor}{\mathop{\rm Mor}\nolimits}
\newcommand{\injectlim}{\mathop{\lim\limits_{\textstyle\longrightarrow}}}
\newcommand{\longiso}{\stackrel{\textstyle\sim}{\longrightarrow}}
\newcommand{\doublearrowstack}[2]%
                      {{{{\scriptstyle#1}\atop{\textstyle\longrightarrow}}\atop{{\textstyle\longrightarrow}\atop{\scriptstyle#2}}}}
\newcommand{\rightleftarrowstack}[2]%
                      {{{{\scriptstyle#1}\atop{\textstyle\longrightarrow}}\atop{{\textstyle\longleftarrow}\atop{\scriptstyle#2}}}}
\newcommand{\leftrightarrowstack}[2]%
                      {{{{\scriptstyle#1}\atop{\textstyle\longleftarrow}}\atop{{\textstyle\longrightarrow}\atop{\scriptstyle#2}}}}
\newtheorem{thm}{Theorem}[section]
\newtheorem{cor}[thm]{Corollary}
\newtheorem{lem}[thm]{Lemma}
\newtheorem{prop}[thm]{Proposition}
\newtheorem{defn}[thm]{Definition}
\newtheorem{rmk}[thm]{Remark}
\newenvironment{pf}{\begin{trivlist}\item[]{\sc Proof.}}%
            {\nolinebreak $\Box$ \end{trivlist}}
\newenvironment{pfrecap}{\begin{trivlist}\item[]{\sc Proof of Theorem~\ref{recap}.}}%
            {\nolinebreak $\Box$ \end{trivlist}}
\newenvironment{pfStep1}{\begin{trivlist}\item[]{\sc Proof of Step~1.}}%
            {\nolinebreak $\Box$ \end{trivlist}}
\newenvironment{pfStep2}{\begin{trivlist}\item[]{\sc Proof of Step~2.}}%
            {\nolinebreak $\Box$ \end{trivlist}}
\newenvironment{pfqiso}{\begin{trivlist}\item[]{\sc Proof of Theorem~\ref{thm:WeakEqImplyQuasiIso}.}}%
            {\nolinebreak $\Box$ \end{trivlist}}            
\newenvironment{proof}{\begin{trivlist}\item[]{\sc Proof.}}%
            {\nolinebreak $\Box$ \end{trivlist}}
\newcommand{\del}{\partial}
\newcommand{\longto}{\longrightarrow}
\DeclareMathOperator\id{id}
\newcommand{\im}{\mathop{\rm im}\nolimits}
\newcommand{\pr}{\mathop{\rm pr}\nolimits}
\newcommand{\Sym}{\mathop{\rm Sym}\nolimits}
\newcommand{\Hom}{\mathop{\rm Hom}\nolimits}
\newcommand{\contract}{{\,\lrcorner\,}}
\newcommand{\noprint}[1]{}
\newcommand{\eetale}{a local diffeomorphism}
\newcommand{\Ho}{\mathrm{Ho}}
\newcommand{\catL}{\mathscr{L}}
\newcommand{\germL}{\catL_{\text{germ}}}
\title{On the structure of  \'etale fibrations of $L_\infty$-bundles }
\author{Kai Behrend,
Hsuan-Yi Liao
and Ping Xu}
\date{{\it Dedicated to the memory of Bumsig Kim}}
\newcommand{\Addresses}{{
  \bigskip
  \footnotesize

  Kai Behrend, \textsc{Department of Mathematics,  University of British Columbia}\par\nopagebreak
  \textit{E-mail address}: \texttt{behrend@math.ubc.ca}

  \medskip

  Hsuan-Yi Liao, \textsc{Department of Mathematics, 
National Tsing Hua University}\par\nopagebreak
  \textit{E-mail address}: \texttt{hyliao@math.nthu.edu.tw}

  \medskip

  Ping Xu, \textsc{Department of Mathematics, Pennsylvania State University}\par\nopagebreak
  \textit{E-mail address}: \texttt{ping@math.psu.edu}

}}
\begin{document}
\sloppy

\maketitle

\begin{abstract}
{We prove that an \'etale fibration between $L_\infty$-bundles admits local sections composed of several elementary morphisms of particularly simple and accessible type. As applications, we establish an inverse function theorem for $L_\infty$-bundles and provide an elementary proof that every weak equivalence of $L_\infty$-bundles induces a quasi-isomorphism of the differential graded algebras of global functions. Furthermore, we apply this inverse function theorem to show that the homotopy category of $L_\infty$-bundles admits a simple description in terms of homotopy classes of morphisms, when $L_\infty$-bundles are restricted to their germs around their classical loci.
} 

\medskip
\noindent {2020 Mathematics Subject Classification {58A50}.}
\end{abstract}

\let\thefootnote\relax\footnotetext{Research partially supported by NSF grants DMS-2001599 and DMS-2302447, KIAS Individual Grant MG072801 and MOST/NSTC Grant
110-2115-M-007-001-MY2 and 112-2115-M-007-016-MY3.}

\tableofcontents

\section*{Introduction}\addcontentsline{toc}{section}{Introduction}

The present paper is a contribution to the foundations of derived geometry in the $C^\infty$-context.  The setting is the theory of $L_\infty$-bundles \cite{BLX1}. The paper concerns the structure of morphisms of $L_\infty$-bundles.  We prove an inverse function theorem, and we provide applications to the structure of the homotopy category of $L_\infty$-bundles.

An {\em $L_\infty$-bundle} is a triple $\M=(M,L,\lambda)$, where $M$ is a $C^\infty$-manifold, $L=L^1\oplus\ldots\oplus L^n$ is a finite-dimensional graded vector bundle of positive amplitude over $M$, and $\lambda=(\lambda_k)_{0\leq k< n}$ is a sequence of bundle maps $\lambda_k:\Sym^k L \to L$ of degree $1$, making each fiber $L|_P$, where $P\in M$, a curved $L_\infty[1]$-algebra.  
The zeroth operation $\lambda_0$ is a global section of $L^1$ (called the {\em curvature}). 
We will refer to the points of $M$ at which $\lambda_0$ vanishes as the {\em classical points}, and to the set of all classical points as the {\em classical locus} or the {\em Maurer--Cartan locus} of $\M$, denoted $\pi_0(\M)$.

A remark on terminology: having all operations of the same degree entails significant simplifications when dealing with signs. Some may regard the use of the term $L_\infty$-bundle, rather than $L_\infty[1]$-bundle, as an abuse of notation, but there is precedent for this usage in the literature, e.g.~\cite{ezra}.

A {\em morphism }of $L_\infty$-bundles $(f,\phi):(M,L,\lambda)\to
(M',L',\lambda')$ consists of a differentiable map  of manifolds $f:M\to M'$, and 
a sequence of bundle maps 
$$
\phi_k:\Sym^k L \to L', \qquad k \geq 1,
$$ 
of degree zero over $f$. It is required that for every $P\in M$, the induced
sequence of maps $\phi|_P$ defines a morphism of curved $L_\infty[1]$-algebras $(L,\lambda)|_P\to (L',\lambda')|_{f(P)}$. 
There are two particularly important classes of morphisms in the category of $L_\infty$-bundles --- {\em fibrations} and {\em weak equivalences}.  
A {\em fibration} is a morphism of $L_\infty$-bundles $(f,\phi):(M,L,\lambda)\to
(M',L',\lambda')$ such that $f:M\to M'$ is a submersion, and $\phi_1:L\to f^\ast L'$ is a degree-wise surjective morphism of  graded vector bundles over $M$. 
A {\em weak equivalence} is a morphism of $L_\infty$-bundles which is \'etale and induces a bijection on classical loci. (A morphism is {\em \'etale} if it induces a quasi-isomorphism on tangent complexes at all points of the
 classical locus.)  
It is proved in \cite{BLX1} that $L_\infty$-bundles form a category of fibrant objects in the sense of Brown \cite{MR341469}. This category contains both the category of $C^\infty$-manifolds, and the category of finite dimensional, positively graded $L_\infty[1]$-algebras  as  full subcategories. This feature of the category of $L_\infty$-bundles suggests its importance in derived differential geometry \cite{MR2641940,carchedi2012homological, MR3121621, MR3221297, borisov2011simplicial,  2018arXiv180407622P, Nuiten, MR4722106,2019arXiv190506195C}.

In the present paper, we investigate the structure of \'etale and trivial fibrations between $L_\infty$-bundles. By {\em trivial fibration}, we mean a fibration which is also a weak equivalence. For instance, if $U$ is an open neighborhood of the classical locus of $\M$ in the base manifold $M$, then the inclusion map $\M|_U \hookrightarrow \M$ is a trivial fibration. Recall that every fibration is equal to the composition of an isomorphism followed by a {\em linear fibration}  (Proposition~\ref{prop:IsoToLinearMor}). Here, a morphism $(f,\phi)$ is said to be {\em linear} if $\phi_k$ vanish for all $k >1$. 
Therefore, we can focus our attention on linear fibrations. 

A main technique in studying $L_\infty$-bundles is the {\em (homotopy) transfer theorem} (Proposition~\ref{transfertheorem}). When applying the transfer theorem to an $L_\infty$-bundle $\M$, we obtain a weakly equivalent $L_\infty$-bundle $\M'$ over the same base manifold, together with an inclusion morphism $\iota:\M' \to \M$ and a projection morphism $\pi:\M \to \M'$, such that the composition $\pi \bullet \iota$ is the identity morphism, where both $\iota$ and $\pi$ are weak equivalences.
A {\em transfer embedding} $(U,H,\mu) \to (M,L,\lambda)$ is  the composition of an inclusion morphism induced from the transfer theorem and a trivial fibration of the type $\M|_U \hookrightarrow \M$, where $U$ is an open neighborhood of the classical locus of $\M$. 

A particularly simple kind of linear $L_\infty$-morphisms is given as follows.  Let $\M=(M,L,\lambda)$ be an $L_\infty$-bundle.  Let $Y\subset M$ be a submanifold, and $E^1\subset L^1|_Y$ a subbundle, such that the curvature $\lambda_0|_Y:Y\to L^1|_Y$ factors through $E^1$. Then $\Y=(Y,E^1 \oplus L^{\geq2}|_Y)$ is an $L_\infty$-bundle in a canonical way, and the inclusion $\Y\to \M$ is a linear morphism.  Let us call such embeddings {\em simple embeddings}.

Our main technical result can be summarized by saying that linear \'etale fibrations admit particularly ``nice'' sections, at least after replacing the target with a locally isomorphic one:

\begin{trivlist}
\item {\bf Theorem A} (Theorem~\ref{recap}) Let $f:\M\to\N$ be a linear  \'etale  fibration.  Then there exists a commutative diagram of $L_\infty$-bundles 
$$\begin{tikzcd}
\M'\arrow[d,"\iota"'] & \N'\arrow[d,"u"]\arrow[l,"s"']\\
\M\arrow[r,"f"] & \N\rlap{\,,}
\end{tikzcd}$$
where $u:\N'\to \N$ is a local isomorphism, $s:\N'\to\M'$ is a simple embedding, and $\iota:\M'\to \M$ is a finite composition of  transfer embeddings. Moreover, $u$ and $s$ are linear.   

Although $\iota$, the composition of transfer embeddings, is not linear, the composition $f\circ\iota:\M'\to \N$ is linear, and induces isomorphisms of bundles in degrees two and higher. 
\end{trivlist}

We can view this theorem as exhibiting how $\M$ is built up from $\N$ (or $\N'$)  in several steps. In this sense, we consider Theorem~A  a structure theorem for \'etale fibrations.

\subsubsection*{Inverse function theorem}

When we apply Theorem~A to the case of trivial fibrations, adding the assumption that $f$ induces a bijection on classical loci, we can strengthen the result, and replace the local isomorphism $u$ by the inclusion $\N|_U\to\N$ of an open neighborhood of the classical locus in $\N$. We obtain:

\begin{trivlist}
\item {\bf Theorem B} (Corollary~\ref{cor:LocalSec}) {\it   Every trivial fibration of $L_\infty$-bundles admits a section, at least after restricting to an open neighborhood of the classical locus of the target.}
\end{trivlist}

An important application is to the structure of the homotopy category of $L_\infty$-bundles. We define  the {\em category of germs of $L_\infty$-bundles} by declaring 
$$\Mor_{\text{germ}}(\M,\N)=\injectlim_{U}\Mor_{\text{$L_\infty$-bundles}}(\M|_U,\N)\,,$$
where the limit is taken over all open neighborhoods of the classical locus of $\M$.  This new category inherits the structure of category of fibrant objects, and still has the same homotopy category. It has the additional property, that every trivial fibration admits a section.  Such categories of fibrant objects have a particularly simple description of their homotopy categories:  the morphisms are simply homotopy classes of morphisms
 (Corollary~\ref{cor:htpCat}).   Here two morphism germs $f,g:\M\to \N$ are homotopic if there exists a morphism germ $h:\M\to\P\N$ from $\M$ to a path space object $\P\N$  of $\N$, which gives back $f$ and $g$ upon composing with the two evaluation maps $\P\N\to\N$. (The existence of $\P\N$ and the two evaluation maps is guaranteed by the axioms of category of fibrant objects.)

After the posting of the first e-print version \cite{2020arXiv200601376B} on
 arXiv.org, we learned that  a similar version of the inversion function theorem for $L_\infty$-bundles (called  $L_\infty$ spaces by the authors)  
was also  obtained by Amorim-Tu using a different method \cite{MR4542391}.

If $L$ is entirely concentrated in degree 1 (so $n=1$), we speak of {\em quasi-smooth }$L_\infty$-bundles. The notion of quasi-isomorphism of quasi-smooth $L_\infty$-bundles was studied by Mather  in the 1960s (see the notion of $\mathscr{C}$-equivalence, \cite[Proposition 2.3]{MatherIII}).   Mather restricts to the equidimensional case.  He develops interesting criteria for how to detect quasi-isomorphisms of germs by considering jets.  It might be interesting to develop such criteria for the more general context of our paper, but this is left for future research.

\subsubsection*{The associated differential graded algebra}

By standard facts, we can associate to an $L_\infty$-bundle $(M,L,\lambda)$:
\begin{items}
\item a differential graded manifold whose structure sheaf is the sheaf of commutative differential graded algebras $\A = \Sym_{\O_M}L^\vee$, equipped with a derivation $Q:\A \to \A$ of degree 1, induced by $(\lambda_k)$, satisfying $Q^2 = \frac{1}{2}[Q,Q] = 0$;
\item the commutative differential graded algebra $\Gamma(M,\A)$ of global sections.
\end{items}

As another application of Theorem~A, we give an elementary proof of the following

\begin{trivlist}
\item {\bf Theorem C} (Theorem~\ref{thm:WeakEqImplyQuasiIso}) {\it  Any weak equivalence  of $L_\infty$-bundles induces a quasi-isomorphism of the associated differential graded algebras of global sections.}
\end{trivlist}

The importance of this theorem lies in the fact that it establishes that the differential graded algebra of global sections is an invariant of an $L_\infty$-bundle:  Weakly equivalent $L_\infty$-bundles have quasi-isomorphic global section algebras.

The converse, namely that every morphism of $L_\infty$-bundles, which induces a quasi-isomorphism on global section differential graded algebras is a weak equivalence,  has been proved using  techniques from the theory of dg $C^\infty$-rings, in particular the existence of the cotangent complex.  
See \cite{Nuiten, 2018arXiv180407622P, 2023arXiv230311140C,2023arXiv230408671S,2023arXiv230312699T}.

\subsection*{Notations and conventions}

Differentiable means $C^\infty$. Manifold means differentiable manifold, which includes second countable and Hausdorff as part of the definition.  Hence manifolds admit partitions of unity, which implies that vector bundles admit connections, and fiberwise surjective homomorphisms of vector bundles admit sections.

For any graded vector bundle $E$ over a manifold $M$, we sometimes use the same symbol $E$ to denote the sheaf of its sections over $M$, by abuse of notation. 
In particular, for a graded vector bundle $L$ over $M$, by
$\Sym_{\O_M} L^\vee$, or simply $\Sym L^\vee$, we denote the sheaf, over $M$,
 of sections of the graded vector bundle $\Sym L^\vee$, i.e. the sheaf of
fiberwise polynomial functions on $L$.

The bundle maps are over the identity maps unless otherwise stated. 

For graded vector bundles $E$ and $F$ over a manifold $M$, we denote by $\Sym_M^n (E,F)$ the space of vector bundle maps from $\Sym E$ to $F$ of degree $n$. 

The notation $| \cdot |$ denotes the degree of an element. When we use this notation, we assume the input is a homogeneous element.

\section*{Acknowledgments}

We would like to thank several institutions for their hospitality, which made the completion of this project possible: Pennsylvania State University (Liao); the University of British Columbia (Liao and Xu); the National Center for Theoretical Science (Liao); KIAS (Xu); the Institut des Hautes \'{E}tudes Scientifiques (Liao); and the Institut Henri Poincar\'{e} (Behrend, Liao, and Xu).

We are also grateful to Ruggero Bandiera, Damien Broka, David Carchedi, Alberto Cattaneo, David Favero, Domenico Fiorenza, Ezra Getzler, Owen Gwilliam, Vladimir Hinich, Bumsig Kim, Wille Liu, Marco Manetti, Raja Mehta, Pavel Mnev, Joost Nuiten, Byungdo Park, Jonathan Pridham, Dima Roytenberg, Pavol \v{S}evera, Mathieu Sti\'enon, and Ted Voronov for fruitful discussions and valuable comments.

\section{Preliminaries}

We briefly summarize some important facts about $L_\infty$-bundles in this section. See \cite{BLX1} for more details.  

\subsubsection*{$L_\infty$-bundles}

Recall that a morphism of $L_\infty$-bundles $(f,\phi):(M,L,\lambda)\to
(M',L',\lambda')$ consists of a differentiable map of manifolds $f:M\to M'$, and $\phi \in \Sym_M^0(L,f^\ast L')$ such that $\phi \circ \lambda = \lambda' \bullet \phi$ --- see \cite[Section~1.2]{BLX1} for the precise definitions of $\circ$ and $\bullet$. A morphism $(f,\phi)$ is said to be linear if $\phi_k = 0$ for all $k \geq 2$. 
It is called a fibration if $f$ is a submersion, and $\phi_1:L \to f^\ast L'$ is surjective. The following proposition was proved in \cite{BLX1}.

\begin{prop}[\text{\cite[Lemma 3.24]{BLX1}}]
\label{prop:IsoToLinearMor}
Every fibration is equal to 
the composition of a linear fibration with an isomorphism. 
\end{prop}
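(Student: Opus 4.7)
The plan is to keep the base map $f\colon M\to M'$ and the linear part $\phi_1\colon L\to f^{\ast}L'$ unchanged, and to absorb the higher Taylor components $\phi_k$ ($k\geq 2$) into an isomorphism of $L_\infty$-bundles over the identity of $M$. More precisely, I would look for an isomorphism $\sigma\colon (M,L,\tilde\lambda)\to (M,L,\lambda)$ of $L_\infty$-bundles covering $\id_M$, with linear part $\sigma_1=\id_L$, such that the $L_\infty$-composition $\phi\bullet\sigma$ is \emph{linear} as a morphism $(M,L,\tilde\lambda)\to(M',L',\lambda')$. Once such a $\sigma$ is constructed, we automatically get the factorization
\[
(f,\phi)\;=\;(f,\phi\bullet\sigma)\,\bullet\,\sigma^{-1},
\]
where $\sigma^{-1}\colon (M,L,\lambda)\to(M,L,\tilde\lambda)$ is an isomorphism and $(f,\phi\bullet\sigma)$ is linear; since its linear part equals $\phi_1$, which is a fiberwise surjection over the submersion $f$, the second factor is a linear fibration.

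To set this up, note that any bundle endomorphism $\sigma=(\sigma_k)_{k\geq 1}\colon\Sym L\to L$ with $\sigma_1=\id$ is an isomorphism of the symmetric coalgebra $\Sym L$ over itself, and then $\tilde\lambda:=\sigma^{-1}\bullet\lambda\bullet\sigma$ is the unique $L_\infty[1]$-structure on $L$ making $\sigma$ a strict isomorphism. Thus the only real task is to produce $\sigma_k$ ($k\geq 2$) so that $\phi\bullet\sigma$ is linear, i.e.\ $(\phi\bullet\sigma)_n=0$ for all $n\geq 2$. I would construct the $\sigma_n$ inductively in $n$. Expanding the $L_\infty$-composition, the $n$-th Taylor component has the form
\[
(\phi\bullet\sigma)_n \;=\; \phi_1\circ\sigma_n \;+\; R_n(\phi_2,\dots,\phi_n;\,\sigma_2,\dots,\sigma_{n-1}),
\]
where $R_n$ is a universal polynomial expression involving only $\sigma_j$ with $j<n$. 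So at step $n$, $R_n$ is already known, and we need to solve $\phi_1\circ\sigma_n=-R_n$ for a degree-$0$ bundle map $\sigma_n\colon\Sym^n L\to L$.

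The key point is that $\phi_1\colon L\to f^{\ast}L'$ is a degree-wise surjective map of graded vector bundles over the paracompact manifold $M$, so it admits a (degree-preserving) $C^\infty$ splitting $s\colon f^{\ast}L'\to L$. Setting $\sigma_n:=-s\circ R_n$ produces the required $\sigma_n$, and the induction proceeds. Because $\sigma_1=\id$, all $\sigma_n$ are $\O_M$-linear, so the resulting $\sigma$ is a well-defined coalgebra automorphism of $\Sym L$ over $M$. Defining $\tilde\lambda:=\sigma^{-1}\bullet\lambda\bullet\sigma$ then makes $\sigma$ an isomorphism of $L_\infty$-bundles covering $\id_M$, and $\phi\bullet\sigma$ is linear by construction, yielding the desired decomposition.

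The main potential obstacle is bookkeeping rather than substance: one has to check that the recursive formula for $(\phi\bullet\sigma)_n$ really does isolate the summand $\phi_1\circ\sigma_n$ with all other contributions depending only on $\sigma_{<n}$. This follows from the standard formula for the composition of $L_\infty[1]$-morphisms in terms of coalgebra maps of symmetric coalgebras, where the term with a single $\sigma$-block of size $n$ can only pair with $\phi_1$. Once this is in place, surjectivity of $\phi_1$ combined with the existence of smooth splittings of surjective bundle maps (guaranteed by paracompactness of $M$, as recalled in the conventions section) finishes the argument.
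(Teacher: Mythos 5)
The paper does not actually prove this proposition -- it is quoted from \cite{BLX1} -- so there is no in-text argument to compare against. Your proposal is correct and is the standard argument one expects (and, as far as I can tell, the one used in the cited reference): split the degreewise surjection $\phi_1$, recursively define $\sigma_n=-s\circ R_n$ to kill the higher Taylor coefficients, transport the bracket by $\tilde\lambda=\sigma^{-1}\bullet\lambda\bullet\sigma$, and observe that the inductive step works because a single block of size $n$ in the composition formula can only pair with $\phi_1$.
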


A morphism $\M \to \M'$ of $L_\infty$-bundles is \'{e}tale at a classical point $P$ of $\M$ if it induces a quasi-isomorphism of tangent complexes at $P$. We say a morphism $\M \to \M'$ is \'{e}tale if it is \'{e}tale at every classical point of $\M$. A morphism $\M \to \M'$ is a weak equivalence if it is \'{e}tale and induces a bijection of classical loci. 
The main theorem in \cite{BLX1} is the following

\begin{thm}[\cite{BLX1}]\label{thm:main}
The category of $L_\infty$-bundles is a category of fibrant objects.
\end{thm}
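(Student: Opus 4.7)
The plan is to verify Brown's axioms for a category of fibrant objects: (i) a terminal object exists and every object is fibrant; (ii) isomorphisms are both weak equivalences and fibrations, and each class is closed under composition; (iii) weak equivalences satisfy the 2-out-of-3 property; (iv) pullbacks of fibrations and of trivial fibrations exist and lie in the respective class; and (v) every object admits a path space factorization of its diagonal. The structural axioms (i) and (ii) are essentially formal, since submersions and fiberwise-surjective bundle maps compose well, and the terminal object is the zero bundle over a point.

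For (iii), the bijection-on-classical-loci condition satisfies a set-theoretic 2-out-of-3, while \'etaleness at a classical point reduces, via the tangent complex, to 2-out-of-3 for quasi-isomorphisms of cochain complexes. Some care is needed to identify tangent complexes along composable morphisms, but this is straightforward because morphisms preserve classical points and linearization at a classical point yields a strict morphism of complexes.

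For (iv), the base manifolds pull back because the base map of a fibration is a submersion, and the graded bundle together with all $L_\infty[1]$-operations pull back fiberwise. Surjectivity of $\phi_1$ then ensures that the projection from the pullback back to the domain remains a fibration; preservation of trivial fibrations follows from the compatibility of classical loci and tangent complexes with base change.

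The main obstacle, and what I expect to occupy the bulk of the argument, is the construction of a path space object $\P\M$ for each $\M=(M,L,\lambda)$. The strategy is to combine a geometric path space for $M$, for instance a tubular neighborhood of the diagonal in $M\times M$ equipped with an interval fibration, with an $L_\infty[1]$-structure on the bundle side that interpolates the two copies of $L$. Concretely, one takes the base of $\P\M$ to be such a neighborhood, builds the bundle from pullbacks of $L$ at the two endpoints together with a piece encoding the infinitesimal path (constructed from $TM$ and an interval parameter), and defines the higher operations from a choice of connection together with transferred fragments of $\lambda$. One then checks that the constant-path inclusion $\M\to\P\M$ is a weak equivalence and that the endpoint evaluation $\P\M\to\M\times\M$ is a fibration. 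Verifying the higher $L_\infty$-relations globally, and the fibration property uniformly over the base, is the technical heart of the theorem and, I expect, depends in an essential way on the homotopy transfer theorem together with partition-of-unity gluing.
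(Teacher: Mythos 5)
The paper does not actually prove this statement: Theorem~\ref{thm:main} is quoted from the prequel \cite{BLX1}, where the category-of-fibrant-objects structure on $L_\infty$-bundles is established. So there is no in-paper argument to compare yours against line by line; the comparison has to be with what \cite{BLX1} does. Your outline of the axioms is structurally on target, and your sketch of the path object (a neighborhood of the diagonal in $M\times M$ carrying an interval's worth of data built from $a^\ast(T_M\oplus L)$ and $a^\ast L$, cut down by homotopy transfer) is in fact close in spirit to the actual construction in \cite{BLX1}. The routine axioms are also handled essentially as you describe, modulo one omission: to put an $L_\infty[1]$-structure on the fibered product of bundles one first reduces to \emph{linear} fibrations via Proposition~\ref{prop:IsoToLinearMor}, since for a nonlinear $\phi$ the naive fiberwise fibered product of curved $L_\infty[1]$-algebras is not obviously an $L_\infty[1]$-algebra.

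The genuine gap is that the heart of the theorem --- the existence of a path space object --- is not proved but only announced as a strategy. You yourself flag that ``verifying the higher $L_\infty$-relations globally, and the fibration property uniformly over the base, is the technical heart,'' and then defer it to an unspecified combination of homotopy transfer and partitions of unity. This is precisely where all the work lies: one must produce a \emph{finite-dimensional} graded bundle over the chosen base (the naive space of paths is infinite-dimensional, so a contraction of the interval's de Rham complex must be chosen and the transfer theorem applied fiberwise in a way that varies smoothly), verify that the transferred operations assemble into a globally defined curved $L_\infty[1]$-structure, and then check that $\ev_0\times\ev_1$ is degreewise surjective on bundles and a submersion on bases while the constant-path inclusion is étale and bijective on classical loci. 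None of these verifications is formal, and until they are carried out the proposal is a plan rather than a proof. If you want to complete it, the transfer machinery of Proposition~\ref{transfertheorem} (with a filtration making the interval direction contractible) is indeed the right tool, but the smooth dependence on the base point and the identification of the classical locus of $\P\M$ with that of $\M$ must be argued explicitly.
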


\subsubsection*{Homotopy transfer and weak equivalence}

An important tool in the present paper is the homotopy transfer theorem. Let $(L,\delta)$ be a complex of vector bundles (of finite dimension, concentrated in finitely many positive degrees) over a manifold $M$. Assume that $L$ is endowed with a finite descending filtration 
\begin{equation}\label{eq:GeneralFiltation}
L=F_0 \supset F_1 \supset \ldots
\end{equation}
which is compatible with the grading and the differential $\delta$. 

\begin{rmk}\label{varfil}
There is a {\em natural filtration }of $L$ given by 
\begin{equation}\label{eq:NaturalFiltration}
F_k =\bigoplus_{i\geq k}L^i\,.
\end{equation}

Every $L_\infty[1]$-algebra structure on $L$ is {\em filtered} (i.e.\ it increases the filtration degree by $1$) with respect to the natural filtration~\eqref{eq:NaturalFiltration}.

In the proof of Lemma~\ref{firstcase}, we need the following variation of the natural
filtration $F$ at level $n$.  We define 
$$\tilde F_k=F_k\,,\qquad\text{for all $k\not=n$}\,,$$
and
$$\tilde F_n=\bigoplus_{i\geq n+1}L^{i}\,.$$
It is easy to check that 
$$
\lambda_m: \tilde{F}_{k_1} \times \ldots \times \tilde{F}_{k_m} \to \tilde{F}_{k_1 + \ldots + k_m +1}, \qquad \forall \, m \geq 2,
$$
and 
$$
\lambda_1:\tilde{F}_k \to \tilde{F}_{k+1}, \qquad \forall\, k \neq n-1.
$$
\end{rmk}

Let $\eta:L \to L$ be a vector bundle map of degree $-1$ which is compatible with the filtration $F$ and satisfies the following two equations:
$$\eta^2=0\,,\qquad \eta\delta\eta=\eta\,.$$
(We call $\eta$ a {\em contraction} of $\delta$.) 
Under these hypotheses, $\delta\eta$ and $\eta\delta$ are projection operators, and so is $[\delta,\eta]$.  We define $$H=\ker[\delta,\eta]=\ker(\delta\eta)\cap\ker(\eta\delta)=\im(\id_L-[\delta,\eta])\,$$ which is a graded vector subbundle of $L$. Let us write $\iota:H\to L$ for the inclusion, and $\pi:L\to H$ for the projection. We have 
$$\iota \pi=\id_L-[\delta,\eta]\,, \quad\text{and}\quad \pi\iota=\id_H\,.$$ 
We write $\delta$ also for the induced differential on $H$. Then $\iota:(H,\delta)\to (L,\delta)$ is a homotopy equivalence with homotopy inverse $\pi$. 

\begin{prop}[Homotopy Transfer Theorem]\label{transfertheorem}
Let $\lambda = (\lambda_k)_{k \geq 0}$ be a curved $L_\infty[1]$-structure on $(L,\delta)$. If $\lambda$ is filtered, then there is a unique  $\phi\in \Sym_M^0(H,L)$ satisfying the equation
\begin{equation}\label{recu}
\phi=\iota-\eta \lambda\bullet\phi\,.
\end{equation}
Setting
$$\mu=\pi\lambda\bullet\phi\in \Sym_M^1(H,H)$$ defines a curved $L_\infty[1]$-structure on $(H,\delta)$, such that $(\id,\phi)$ is a morphism of $L_\infty$-bundles from $(M,H,\delta+\mu)$ to $(M,L,\delta+\lambda)$. 

Furthermore, there exists a morphism of $L_\infty$-bundles $(\id, \tilde\pi):(M,L, \delta + \lambda) \to (M,H, \delta + \nu)$ satisfying the equations $\tilde\pi \bullet \phi = \id_H$ and $\tilde\pi_1 = \pi - \tilde\pi_1 \lambda_1 \eta$.
\end{prop}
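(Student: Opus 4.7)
The plan is to treat the defining equation $\phi=\iota-\eta\lambda\bullet\phi$ as a fixed-point recursion in the filtration, exploit the filteredness of $\lambda$ to obtain unique solvability termwise, and then derive the transferred structure and morphism identities from the ``side conditions'' $\eta^{2}=0$, $\eta\delta\eta=\eta$, $\pi\iota=\id_H$, $\iota\pi=\id_L-[\delta,\eta]$, together with the curved Maurer--Cartan equation $(\delta+\lambda)\bullet(\delta+\lambda)=0$.

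\textbf{Step 1 (existence and uniqueness of $\phi$).} Write $\phi=\sum_{k\geq 0}\phi_k$ with $\phi_k\colon \Sym^k H\to L$, where $\phi_0$ records the curvature contribution. The equation $\phi=\iota-\eta\lambda\bullet\phi$ unfolds, on arity $k$ and filtration piece $F_p/F_{p+1}$, into an expression for $\phi_k\pmod{F_{p+1}}$ involving only $\phi_j$ with $j<k$, together with $\phi_k\pmod{F_p}$. Since $\lambda$ is filtered (strictly increases the filtration weight) and the filtration on $L$ is finite, a double induction on $(k,p)$ — starting from $\phi_1\equiv\iota\pmod{F_1}$ — produces the unique solution $\phi$ and forces the solution to be well-defined.

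\textbf{Step 2 (morphism property of $\phi$).} I would establish $(\delta+\lambda)\bullet\phi=\phi\bullet(\delta+\mu)$ by applying $\delta$ to the defining recursion and using $\delta\iota=\iota\delta$ together with $[\delta,\eta]=\id_L-\iota\pi$. This gives
$$(\delta+\lambda)\bullet\phi=\iota\pi\lambda\bullet\phi+\bigl(\id_L-[\delta,\eta]\bigr)\lambda\bullet\phi-\eta\delta\lambda\bullet\phi+\lambda\bullet\phi,$$
which, after substituting $\lambda\bullet\lambda=-\delta\lambda-\lambda\delta$ (the curved Maurer--Cartan identity projected onto $\bullet\phi$) and reintroducing the recursion for $\phi$ one step at a time, telescopes to $\iota\pi\lambda\bullet\phi+\phi\bullet(\pi\lambda\bullet\phi\,\text{-terms})=\phi\bullet(\delta+\mu)$, by the very definition of $\mu=\pi\lambda\bullet\phi$. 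The equality is verified arity by arity using the inductive control from Step 1.

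\textbf{Step 3 (transferred $L_\infty[1]$-structure).} The equation $(\delta+\mu)\bullet(\delta+\mu)=0$ follows by pre-composing the morphism identity of Step 2 with $\phi$ and post-composing with $\pi$: using $\pi\iota=\id_H$, the left-hand side of $(\delta+\mu)\bullet(\delta+\mu)\bullet\pi\ldots$ is obtained by transporting $(\delta+\lambda)^{\bullet 2}=0$ through $\phi$. Alternatively, one can check it directly by applying $\pi\lambda\bullet$ to the morphism equation of Step 2. Either way, it is a formal consequence of Steps 1–2 together with the side conditions.

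\textbf{Step 4 (construction of $\tilde\pi$).} For the first component, the equation $\tilde\pi_1=\pi-\tilde\pi_1\lambda_1\eta$ is solved by iteration: since $\eta$ has degree $-1$ and $\lambda_1$ strictly raises the filtration degree, $\lambda_1\eta$ lowers no filtration degree and the series terminates on each filtration subquotient. For $k\geq 2$, I would build $\tilde\pi_k$ inductively by imposing the normalization $\tilde\pi\bullet\phi=\id_H$; at each arity, combining this normalization with the requirement that $\tilde\pi$ be an $L_\infty$-morphism from $(\delta+\lambda)$ to $(\delta+\nu)$ reduces to a linear equation in $\tilde\pi_k$ modulo lower-arity or higher-filtration data, uniquely solvable by the same filtration argument as in Step 1. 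The transferred structure $\nu$ on $H$ obtained in this way agrees with $\mu$ from Step 3 by uniqueness of the transferred $L_\infty$-structure compatible with $\phi$.

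The main obstacle I anticipate is not any single calculation, but the bookkeeping in Step 2: translating the scalar equations $\phi=\iota-\eta\lambda\bullet\phi$ and $(\delta+\lambda)^{\bullet 2}=0$ into the operadic/coalgebra identities that match the structure of $\bullet$ on $\Sym^{\geq 1}H$ and $\Sym^{\geq 1}L$, while tracking signs in the curved ($\phi_0\neq 0$) case. I would handle this by working in the cofree cocommutative coalgebra framework, where $\phi$ corresponds to a coalgebra morphism and the identities become intertwining relations of codifferentials; filteredness then provides simultaneously a termination criterion and a clean induction scheme.
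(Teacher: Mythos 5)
Your overall strategy --- solving \eqref{recu} by induction on the (finite) filtration, then deriving the morphism identity and the transferred structure from the side conditions --- is exactly the route the paper indicates: the paper does not prove this proposition in detail, but defers to the homological perturbation literature (\cite{MR3276839}, \cite{BLX1}) and only adds the observation that finiteness of the filtration makes the recursions terminate, so that $\phi$, $\mu$ and $\tilde\pi$ are finite sums of compositions of smooth bundle operations and hence smooth. So you are reconstructing the standard proof rather than taking a genuinely different route, and your Step~1 is the correct well-foundedness argument.

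Three points in your sketch need repair before this counts as a proof. First, the displayed identity in Step~2 is garbled: since $\iota\pi=\id_L-[\delta,\eta]$, the terms $\iota\pi\lambda\bullet\phi$ and $(\id_L-[\delta,\eta])\lambda\bullet\phi$ are the same term written twice, so the equation as written cannot be the intended intermediate step; the computation has to be carried out honestly in the coalgebra picture (apply the coderivation $\delta+\lambda$ to the coalgebra morphism $\Phi$ induced by $\phi$, and use $[\delta,\eta]=\id_L-\iota\pi$, $\eta^2=0$, $\eta\delta\eta=\eta$ and the curved Maurer--Cartan identity), which is precisely the bookkeeping you defer. Second, in Step~3 you cannot simply ``post-compose with $\pi$'': $\pi$ does not induce a retraction of the coalgebra morphism $\Phi$, because $\Phi$ is not the map induced by $\iota$ alone. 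The correct formal argument is that $\Phi:\Sym H\to\Sym L$ is injective since its linear component $\phi_1=\iota$ is a split monomorphism, so $\Phi\circ(\delta+\mu)^{\bullet 2}=(\delta+\lambda)^{\bullet 2}\circ\Phi=0$ forces $(\delta+\mu)^{\bullet 2}=0$. Third, in Step~4 the retraction condition $\tilde\pi\bullet\phi=\id_H$ alone does not determine $\tilde\pi_k$ for $k\ge 2$ (as $\phi$ is injective but not surjective), and it is not evident that coupling it with the morphism condition yields a consistent, uniquely solvable system; existence of $\tilde\pi$ is the content to be proved, and the references establish it by an explicit recursion analogous to \eqref{recu} rather than by the implicit argument you propose. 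Finally, you omit the one point the paper does address explicitly, namely smoothness of the resulting bundle maps, which again comes from the recursions terminating after finitely many steps.
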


\begin{rmk}
The formulas for $\phi$, $\mu$, and $\tilde{\pi}$ can be obtained by the
 method in \cite{MR3276839} (see also \cite{2020arXiv201214812B,BLX1, ezra,MR4693987}). In the cases of $\phi$ and $\mu$, explicit formulas can also be derived by solving
 \eqref{recu} recursively, resulting in expressions given as sums over trees. Since the filtration~\eqref{eq:GeneralFiltation} is assumed to have finite length, the operations $\phi$, $\mu$, and $\tilde{\pi}$ are finite sums of compositions of smooth operations. Hence, they are smooth.

\end{rmk}

\begin{prop}\label{prop:TransInclWkEq}
Let $(\id,\phi):(M,H,\delta+ \mu) \to (M,L,\delta+\lambda)$ be the inclusion morphism of $L_\infty$-bundles as in Proposition~\ref{transfertheorem}.
 Then $(\id,\phi)$ is a weak equivalence of $L_\infty$-bundles.
\end{prop}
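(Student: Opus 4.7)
The plan is to verify the two defining conditions of a weak equivalence separately. The underlying map of manifolds being $\id_M$, (a) the induced map on classical loci is a bijection iff $\mu_0^{-1}(0)=\lambda_0^{-1}(0)$ inside $M$, and (b) the \'etale condition reduces to a quasi-isomorphism of tangent complexes at every classical point.

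For (a), I would read off the $L_\infty$-morphism identity $\phi\bullet(\delta+\mu)=(\delta+\lambda)\bullet\phi$ on the unit of $\Sym^0 H$: the left side projects to $\phi_1(\mu_0)$ while the right projects to $\lambda_0$, yielding $\phi_1(\mu_0)=\lambda_0$ as sections of $L^1$. Combining this with $\tilde\pi_1\phi_1=\id_H$ (the linear part of $\tilde\pi\bullet\phi=\id_H$), which shows that $\phi_1$ is pointwise injective, gives $\mu_0(P)=0$ iff $\lambda_0(P)=0$.

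For (b), at a classical point $P$ the tangent complex of $(M,L,\delta+\lambda)$ is the bounded-below complex $\bigl[T_PM\xrightarrow{d_P\lambda_0}L^1|_P\xrightarrow{(\delta+\lambda_1)|_P}L^2|_P\to\cdots\bigr]$ with $T_PM$ sitting in degree~$0$, and analogously for $H$. The tangent map is $\id$ on $T_PM$ and $\phi_1|_P$ on the positive-degree part, and is a chain map by the $L_\infty$-morphism relation. Viewing each tangent complex as an extension of $T_PM$ (in degree $0$) by its fibre complex in degrees $\geq 1$, the 5-lemma applied to the resulting long exact sequences reduces the question to whether $\phi_1|_P\colon(H|_P,(\delta+\mu_1)|_P)\to(L|_P,(\delta+\lambda_1)|_P)$ is a quasi-isomorphism.

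To establish this fibrewise quasi-isomorphism I would apply the classical homological perturbation lemma inside the fibre at $P$: the restriction $\eta|_P$ is a contraction of $(L|_P,\delta|_P)$, the pair $(\iota|_P,\pi|_P)$ is the corresponding deformation retraction onto $(H|_P,\delta|_P)$, and $\lambda_1|_P$ is nilpotent since the filtration $F$ has finite length. The perturbed inclusion produced by the lemma is characterised by the recursion $\phi_1=\iota-\eta\lambda_1\phi_1$, which is exactly the restriction at $P$ of the linear-level global recursion of Proposition~\ref{transfertheorem} (higher $\lambda_k$ drop out at the linear level because $\phi_0=-\eta\lambda_0\in L^0=0$). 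Uniqueness then identifies the perturbed complex with $(H|_P,(\delta+\mu_1)|_P)$ and the perturbed inclusion with $\phi_1|_P$, giving the desired quasi-isomorphism. The main obstacle is precisely this matching step between global and fibrewise transfer constructions: $\iota$, $\pi$, $\eta$, and $\lambda_1$ all commute with restriction to $P$, but some bookkeeping is needed to verify that the uniqueness assertions of Proposition~\ref{transfertheorem} and of the perturbation lemma really pin down the same operators at the fibre level.
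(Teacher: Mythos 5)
Your proposal is correct and follows the same skeleton as the paper for everything except the key fibrewise step. Part (a) is exactly the paper's argument: $\tilde\pi_1\phi_1=\id$ makes $\phi_1$ fibrewise injective and $\phi_1(\mu_0)=\lambda_0$, so the classical loci coincide; and the reduction of the \'etale condition to the statement that $\phi_1|_P:(H|_P,\delta+\mu_1)\to(L|_P,\delta+\lambda_1)$ is a quasi-isomorphism is also what the paper does (it asserts the passage to the full tangent complex without spelling out the five-lemma, which you do). Where you diverge is in proving that fibrewise quasi-isomorphism: the paper filters both complexes by the given finite filtration and runs the comparison spectral sequence, reducing to the fact that $\iota:(H|_P,\delta)\to(L|_P,\delta)$ is a quasi-isomorphism (immediate from $\iota\pi=\id-[\delta,\eta]$), whereas you invoke the homological perturbation lemma at the fibre and then match the perturbed inclusion with $\phi_1|_P$ via the uniqueness of the recursion $\phi_1=\iota-\eta\lambda_1\phi_1$. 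Both hinge on the same finiteness of the filtration (in your version, via nilpotence of $\eta\lambda_1$ rather than boundedness of the spectral sequence); your route yields slightly more (an explicit fibrewise homotopy equivalence with inverse $\tilde\pi_1|_P$) at the cost of the matching bookkeeping you flag, which does go through since the linear part of the global transfer data restricts at $P$ to exactly the perturbation-lemma formulas. Two small points of hygiene: the smallness hypothesis of the perturbation lemma is nilpotence of $\eta\lambda_1$ (not of $\lambda_1$ itself), which holds because $\lambda$ is filtered and $\eta$ preserves the filtration; and $(\delta+\lambda_1)|_P$ squares to zero only because $P$ is a classical point, so the perturbation lemma is genuinely a fibrewise-at-classical-points statement, as you implicitly use.
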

\begin{pf}
Since $\tilde\pi_1\circ\phi_1=\id$, we have that $\phi_1:H^1\to L^1$ is an isomorphism onto a subbundle of $L^1$. Thus $\mu_0(P)=0$ is equivalent to $\lambda_0(P)=0$ for any $P \in M$.  In other words, the Maurer-Cartan loci of $(M, H,\delta+\mu)$ and $(M, L,\delta + \lambda)$ are equal.

Suppose $P$ is a Maurer-Cartan point.  Then $(H|_P,\delta+\mu_1)$ and $(L|_P,\delta+\lambda_1)$ are complexes of vector spaces, and $\phi_1$ is a morphism of complexes.  In fact, $$\phi_1: (H|_P,\delta+\mu_1)\to (L|_P,\delta+\lambda_1)$$ is a quasi-isomorphism. 
To prove this, considering the spectral sequence induced by the given filtration \eqref{eq:GeneralFiltation}, one can reduce the proof to the fact that 
$$\iota:(H|_P,\delta)\longrightarrow (L|_P,\delta)$$ 
is a quasi-isomorphism which is true.  Consequently, the vertical maps 
$$
\xymatrix{
TM|_P \rto \dto^\id &  \dto^{\phi_1|_P}   H^1|_P \rto^-{\delta + \mu_1} & H^2|_P \rto^-{\delta + \mu_1} \dto^{\phi_1|_P} & \ldots \\
TM|_P \rto &   L^1|_P \rto_-{\delta + \lambda_1} & L^2 |_P \rto_-{\delta + \lambda_1} & \ldots
}
$$
also form a quasi-isomorphism of complexes. This completes the proof.
\end{pf}

\section{\'{E}tale fibrations}

\subsection{The statement of main theorem}

Let $\M=(M,L,\lambda)$ and $\N= (N,L',\lambda')$ be $L_\infty$-bundles, and $(f,\phi):\M\to\N$ be an \'{e}tale fibration. Due to Proposition~\ref{prop:IsoToLinearMor}, we may assume that $(f,\phi):\M\to\N$ is linear, i.e. $\phi=\phi_1$. 

\begin{defn}
A morphism $(U,H,\mu) \to (M,L,\lambda)$ of $L_\infty$-bundles is called a \textbf{transfer embedding} if 
\begin{items}
\item
$U$ is an open submanifold of $M$ which contains $\pi_0(\M)$, and 
\item
it is the composition of the trivial fibration $\M|_U \hookrightarrow \M$ with an inclusion morphism obtained by the transfer theorem (Proposition~\ref{transfertheorem}). 
\end{items}

We say $(Y,E,\nu)$ is a {\bf sub-$L_\infty$-bundle} of $(U,H,\mu)$ if $Y$ is a submanifold of $U$, and $E$ is a graded vector subbundle of $H|_Y$ such that the inclusion map $E \hookrightarrow H$ is a linear morphism of $L_\infty$-bundles. 

 Let $Y\subset M$ be a submanifold, and $E^1\subset L^1|_Y$ a subbundle, such that the curvature $\lambda_0|_Y:Y\to L^1|_Y$ factors through $E^1$. Then $\Y=(Y,E^1\oplus L^{\geq2}|_Y)$ is an $L_\infty$-bundle in a canonical way, and the inclusion $\Y\to \M$ is a linear morphism.  We call such embeddings {\bf simple embeddings}. 
\end{defn}

We prove the following theorem in this section.

\begin{thm}\label{recap}
Let $\M=(M,L,\lambda)$ and $\N = (N,L',\lambda')$ be $L_\infty$-bundles, and $(f,\phi): \M \to \N$ a linear fibration. 
\begin{items}
\item[(1)]
If $(f,\phi)$ is \'{e}tale at each point in a subset $Z$ of the Maurer-Cartan locus of $\M$, then there exist an $L_\infty$-bundle $\Y = (Y,E,\nu)$ and a commutative diagram of $L_\infty$-bundles
$$
\begin{tikzcd}
\H \ar[r,dotted] & \M \ar[d] \\
\Y \ar[u,dotted] \ar[ru] \ar[r] & \N
\end{tikzcd}
$$
such that  
\begin{items}
\item
the morphism $\Y \to \M$ is the composition $\Y \to \H \to \M$, where $ \Y \to \H$ is a simple embedding, and $\H = (U,H,\mu) \to \M$ is the composition of a finite sequence of transfer embeddings;

\item
$Z \subset Y \subset M$;
\item
the composition 
$
\H \to \M \to \N
$ 
is a linear fibration  such that $H^{\geq 2} \xrightarrow{\cong} f^*(L')^{\geq 2}$
is an isomorphism of graded vector bundles;
\item
the morphism $\Y \to \N |_{f(Y)}$ is a linear local isomorphism of $L_\infty$-bundles.
\end{items}
\item[(2)]
Furthermore, if $(f,\phi):\M \to \N$ is a trivial linear fibration, then the transfer embeddings and the simple embedding can be chosen so that the morphism $\Y \to \N |_{f(Y)}$ is a linear isomorphism of $L_\infty$-bundles.
\end{items}
\end{thm}
More explicitly, by choosing a splitting of the short exact sequence $0 \to K \hookrightarrow H^1 \xrightarrow{} f^\ast(L')^1 \to 0$ of vector bundles, one has a decomposition
\begin{align*}
H^1 & = K \oplus  f^\ast (L')^1, \\
\mu_0 \,  & = \, u \, + \, f^\ast\lambda_0' \, .
\end{align*}
Then $u$ is a regular section of $K $ over $U$. This means that for every point $P\in M$, such that $u(P)=0$, the derivative induces a surjective
linear map $D_P u :TM|_P\to E|_P$.  In other words, $u$ being a regular section 
is equivalent to that 
$u$ is transversal to the zero section of $E$. One can choose $Y$ to be an open neighborhood of $\pi_0(\M)$ in $Z(u)$ and $E^1 = f^\ast (L')^1|_Y$ such that the restriction $f|_Y$ is a diffeomorphism from $Y$ to an open submanifold of $N$ which contains $\pi_0(\N)$.

We proceed the proof of Theorem~\ref{recap} in three steps: 
\begin{items}
\item[{\bf Step~1.}] 
construct a finite sequence of transfer embeddings $(U,H,\mu) \to \cdots \to  (M,L,\lambda)$ such that the composition 
$$
(U,H,\mu) \to (M,L,\lambda) \to (N,L',\lambda')
$$
is a linear fibration whose restriction $\big(U, H^{\geq 2} \big) \to \big( U, f^\ast (L')^{\geq 2}|_U \big)$ is an isomorphism of graded vector bundles; 
\item[{\bf Step~2.}] 
construct a simple embedding $(Y,E,\nu) \hookrightarrow (U,H,\mu)$ such that $Z \subset Y$ and the composition 
\begin{equation}\label{eq:CompToLocIso}
(Y,E,\nu) \to (U,H,\mu) \to (M,L,\lambda) \to (N,L',\lambda')
\end{equation}
is a linear local isomorphism of $L_\infty$-bundles;

\item[{\bf Step~3.}] 
in the case of trivial fibration, use a topological lemma (Lemma~\ref{lem:TopoLem}) to modify the simple embedding constructed in Step~2 so that the composition \eqref{eq:CompToLocIso} becomes an isomorphism. 
\end{items}

\subsection{Step~1: construction of transfer embeddings}

To achieve Step~1, we construct a sequence of morphisms by the transfer theorem in the following way.

\begin{lem}
\label{firstcase}
Consider a fixed base manifold
$M$, with a linear morphism of $L_\infty$-bundles $(\id_M,\phi): (M,L, \lambda)\to (M,L', \lambda') $ over it.
Let $k\geq2$. Suppose that 
\begin{items}
\item $\phi:L\to  L'$ is an isomorphism of vector bundles in
degrees $\geq k+1$, and an epimorphism in degrees $\leq k$. 
\item Moreover, let $Z\subset \pi_0(\M)$ be a subset of the classical locus
of $\M$ such that, at every point $P$ of $Z$, the induced map on the cohomology 
$H^n(L|_P, \lambda_1)\to H^n(L'|_P, \lambda'_1)$ is an isomorphism when
 $n \geq k$ and
surjective when $n =k-1$.
\end{items}
Then, after restricting to an open neighborhood of $Z$ in $M$, if
necessary,  there exists an inclusion morphism 
$\iota:H\to L$ obtained by the transfer theorem such that the composition $\phi\circ \iota:H\to L'$ is a linear morphism of 
$L_\infty$-bundles, which is an isomorphism in degrees $\geq k$ and an epimorphism in degrees $\leq k-1$.
\end{lem}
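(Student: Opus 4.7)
The plan is to apply the homotopy transfer theorem (Proposition~\ref{transfertheorem}) with a contraction $\eta$ carefully chosen so as to kill the kernel $K^k := \ker(\phi\colon L^k\to (L')^k)$ together with a corresponding ``preimage'' piece lying inside $K^{k-1} := \ker(\phi\colon L^{k-1}\to (L')^{k-1})$. Placing the preimage piece inside $\ker\phi$ is precisely what will force the higher Taylor components of the transferred inclusion morphism to be annihilated by $\phi$, so that the composite $\phi\circ\iota$ will automatically be linear.

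I would begin by setting up the kernel bundles. Since $\phi$ is a linear morphism of $L_\infty$-bundles, the bundle-level identity $\phi\circ\lambda_1=\lambda_1'\circ\phi$ holds; this shows that $K^j$ is a subbundle of $L^j$ for $j\leq k$, that $\lambda_1$ restricts to a bundle map $K^{k-1}\to K^k$, and (using that $\phi$ is an isomorphism in degree $k+1$) that $\lambda_1|_{K^k}=0$. At a classical point $P\in Z$, the short exact sequence of complexes
\[
0\longrightarrow K^\bullet|_P \longrightarrow L|_P \longrightarrow L'|_{f(P)}\longrightarrow 0
\]
combined with the cohomology hypothesis (isomorphism in degree $k$ and surjection in degree $k-1$) forces, through the long exact sequence, $H^k(K^\bullet|_P,\lambda_1)=0$, i.e.\ $\lambda_1\colon K^{k-1}|_P\to K^k|_P$ is surjective. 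Surjectivity of a bundle map being an open condition, after shrinking $M$ to an open neighborhood $U$ of $Z$, $\lambda_1\colon K^{k-1}|_U\to K^k|_U$ is a surjective bundle map.

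Next I would build the contraction and identify the harmonic subbundle. Choose a bundle section $\sigma\colon K^k|_U\to K^{k-1}|_U$ of $\lambda_1$ and a splitting $L^k|_U = K^k\oplus V^k$ with projection $\pi_K$. Put $A:=\sigma(K^k)\subset K^{k-1}$ and $B:=\ker(\pi_K\circ\lambda_1|_{L^{k-1}})$; a dimension count yields $L^{k-1}|_U = A\oplus B$, and by construction $\lambda_1(B)\subset V^k$. Define $\eta\colon L|_U\to L|_U$ to equal $\sigma\circ\pi_K$ on $L^k$ and vanish on all other summands, and define an auxiliary differential $\delta$ by $\delta|_A = \lambda_1|_A$ and zero elsewhere. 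Then $\delta^2=0$, $\eta^2=0$, $\eta\delta\eta=\eta$, and a direct computation shows that $[\delta,\eta]$ acts as $\pi_A$ on $L^{k-1}$ and as $\pi_K$ on $L^k$; hence the harmonic subbundle $H:=\ker[\delta,\eta]$ satisfies $H^{k-1}=B$, $H^k=V^k$, and $H^n=L^n$ for $n\notin\{k-1,k\}$.

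Finally I would invoke transfer and check the conclusion. Viewed as a filtered curved $L_\infty$-perturbation $(\lambda_0,\lambda_1-\delta,\lambda_2,\ldots)$ of the complex $(L,\delta)$ --- filtered with respect to the natural filtration, or the variant $\tilde F$ of Remark~\ref{varfil} if needed --- the given structure is in the scope of Proposition~\ref{transfertheorem}, which produces an inclusion morphism $\iota\colon (U,H,\delta+\mu)\to (U,L,\lambda)$. The defining recursion $\iota = \iota_1-\eta(\lambda\bullet\iota)$ shows immediately that $\iota_n(\Sym^n H)\subset\im\eta = A$ for every $n\geq 2$; since $A\subset K^{k-1}=\ker\phi$, one gets $\phi\circ\iota_n=0$ for $n\geq 2$. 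Hence $\phi\circ\iota$ is linear, with linear part $\phi|_H$, and checking the three cases ($n\geq k+1$, $n=k$, $n\leq k-1$) in turn --- using that $V^k$ is complementary to $K^k$ in degree $k$, and that $\phi(A)=0$ together with $\phi(L^{k-1})=(L')^{k-1}$ in degree $k-1$ --- yields the claimed isomorphism in degrees $\geq k$ and epimorphism in degrees $\leq k-1$. The main obstacle is the bridge from the pointwise cohomological hypothesis on $Z$ to a bundle-level conclusion on an open neighborhood, and the simultaneous arrangement of the acyclic piece $A$ inside $\ker\phi^{k-1}$; this alignment between the transfer contraction and the kernel of $\phi$ is precisely what turns a generally nonlinear transferred morphism into a linear one after composition with $\phi$.
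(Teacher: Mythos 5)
Your proposal is correct and follows essentially the same route as the paper's proof: identify the kernel $K=\ker\phi$, deduce fiberwise surjectivity of $\lambda_1:K^{k-1}\to K^k$ along $Z$ from the cohomological hypothesis (your long-exact-sequence argument is the paper's diagram chase), spread it to a neighborhood by openness, build the contraction $\eta$ from a section of $\lambda_1|_K$ and a retraction onto $K^k$ so that $\im\eta\subset\ker\phi$, and conclude linearity of $\phi\circ\iota$ because every higher Taylor coefficient of the transferred inclusion has $\eta$ at the root. The one deviation is that you set $\delta=\lambda_1|_A$ rather than $\delta=\lambda_1$ on all of $L^{k-1}$ as the paper does; this leaves the component $\lambda_1|_B\colon B\to V^k$ inside the perturbation, which has filtered degree $0$ for the variant filtration $\tilde F$, so Proposition~\ref{transfertheorem} does not literally apply --- the recursion still terminates since $\eta\circ\lambda_1|_B=0$, but the cleaner fix is simply to absorb all of $\lambda_1\colon L^{k-1}\to L^k$ into $\delta$ (which yields the same $H$).
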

\begin{pf}
Since $\phi$ is an epimorphism in each degree, the kernel $K$ of $\phi:L\to L'$ is a graded vector
bundle, and an $L_\infty[1]$-ideal in $L$. 
Consider the diagram
$$\xymatrix{
&K^{k-1}\ar@{^{(}->}[d]^-j\rto^{\lambda_1} & K^k\ar@{^{(}->}[d]^-j\\
L^{k-2}\rto&L^{k-1}\rto^{\lambda_1} & L^k\rto& L^{k+1}\rlap{\,.}}$$
A diagram chase proves that at all points of $Z$, the vector bundle
homomorphism $\lambda_1:K^{k-1}\to K^k$ is surjective.  This
will still be the case in an open neighborhood, so we may assume,
without loss of generality, that this map is an epimorphism over all
of $M$.  We then choose a section $\chi:K^k\to K^{k-1}$ of $\lambda_1$, and a retraction of $\theta:L^k\to K^k$ of $j:K^k\to L^k$, and define
$\eta:L^k\to L^{k-1}$ to be equal to $\eta=j\chi\theta$.  
$$\xymatrix@=3pc{
&K^{k-1}\dto^j\ar[r]^{\lambda_1 } & K^k\dto^j\ar@/^/[l]^\chi\\
L^{k-2}\rto & L^{k-1}\rto_{\delta=\lambda_1} &
L^k\rto\ar@/^/[u]^\theta\ar@/_/[l]_{\eta=j\chi\theta}& L^{k+1}}$$
We also write $\delta$ for $\lambda_1:L^{k-1}\to L^k$, but set
$\delta=0$ elsewhere, to define a differential $\delta:L\to L$ of degree~1.  Our definition ensures that $\delta^2=0$, although
$\lambda_1^2\not=0$. 

One checks that $\eta\delta\eta=\eta$, and so $\eta\delta$ and
$\delta\eta$ are projection operators. Hence, in both cases, kernel and
image are bundles. Let $H^{k-1}=\ker \eta\delta$, and $H^k=\ker
\delta\eta$. For $i\not=k,k-1$, we set $H^i=L^i$. 

The two projection operators 
$j\theta= \delta\eta$ coincide, and hence   $H^k=\ker \delta\eta$
is a complement for $K^k=\im j\theta$.  Therefore,   the
composition $H^k\to L^k\to {L'}^{k}$ is an isomorphism. 

We have $L^{k-1}=\ker \eta\delta+\im \eta \delta\subset
\ker\eta\delta+K^{k-1}=H^{k-1}+K^{k-1}$, because $\eta$ factors
through $K^{k-1}$, by definition.  It follows that the composition
$H^{k-1}\to L^{k-1}\to {L'}^{k-1}$ is surjective. 

Now $\lambda-\delta$ is a curved $L_\infty[1]$-structure on the complex of vector bundles $(L,\delta)$, and $\eta$ is a contraction of $\delta$. We apply the transfer theorem for bundles of curved $L_\infty[1]$-algebras: Proposition~\ref{transfertheorem}.    This gives rise
to a structure $\mu$, of a bundle of curved $L_\infty[1]$-algebras on the complex $(H,\delta)$,
together with a morphism of curved $L_\infty[1]$-structures $\iota:H\to L$, whose linear part is given by
the inclusion $H\subset L$. Here the linear part is the inclusion because $\eta(\lambda_1 -\delta) =0$. 
We consider $\delta+\mu$ as an $L_\infty$-bundle structure on $H$. 

The composition $\phi\bullet\iota$ is linear, because all non-trivial trees involved in $\iota$ have $\eta$ at the root, and $\phi\circ\eta=0$.

In order to apply the transfer theorem, we use the  variation of the
natural filtration at level $k$, see Remark~\ref{varfil},  on
$L$. Both $\delta$ and $\eta$ preserve this filtration, hence $H$
inherits this filtration, and  the transfer theorem applies. 
\end{pf}

\begin{pfStep1}
Since all the bundles are assumed to have finite ranks here, the first  assumption in Lemma~\ref{firstcase} is automatically satisfied for a linear fibration with a large enough $k$. It is also clear that the second assumption is satisfied for \'{e}tale fibrations. Applying Lemma~\ref{firstcase} inductively to the given linear fibration, we obtain a sequence of transfer embeddings $(U,H,\mu) \to \cdots \to (M,L,\lambda)$ satisfying the condition in Step~1.
\end{pfStep1}

\subsection{Step~2: splitting}

\begin{lem}\label{lastcase}
Let $(f,\phi):(M,H,\mu)\to (N,L',\lambda')$ be a linear fibration of $L_\infty$-bundles such that  $\phi:H^k\to f^\ast (L')^k$ is an isomorphism, for all $k\geq2$.
Let $Z$ be a subset of the Maurer-Cartan locus of $(H,\mu)$,
such that the morphism of $L_\infty$-bundles $(f,\phi)$ is \'etale at
every point of $Z$. 
After restricting to an open neighborhood $U$ of $Z$ in $M$, there exists a
submanifold $Y$ of $U$, and subbundle $E^1$ of $H^1|_Y$, such that 
\begin{items}
\item $Z\subset Y$,
\item the restriction of the curvature $\lambda_0|_Y$ is contained in $\Gamma(Y,E^1)\subset \Gamma(Y,H^1|_Y)$, so that $E:= E^1\oplus H^{\geq2}|_Y$ is a bundle of curved $L_\infty[1]$-subalgebras of $H|_Y$, 
\item   the composition $Y\to M\to N$ is  \eetale,
\item    the map $\phi|_Y:E^1|_Y\to f^\ast (L')^1|_Y$ is an isomorphism of vector bundles, so that the composition $(Y,E)\to (Y,f^\ast L'|_Y)$ is a linear isomorphism of bundles of curved $L_\infty[1]$-algebras.
\end{items}
In particular, the composition $(Y,E)\to (N, L')$ is \'etale at all points of $Z\subset Y$.  
\end{lem}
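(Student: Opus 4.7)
My plan is to construct $Y$ as (an open neighborhood of $Z$ in) the vanishing locus of the ``$K$-component'' of the curvature $\mu_0$, where $K=\ker\phi$. Since $\phi\colon H^1\to f^\ast(L')^1$ is a surjection of vector bundles, I pick a splitting $s\colon f^\ast(L')^1\to H^1$, giving a decomposition $H^1=K\oplus s\bigl(f^\ast(L')^1\bigr)$. Compatibility of the linear morphism $(f,\phi)$ with the curvatures gives $\phi\circ\mu_0=f^\ast\lambda_0'$, so $\mu_0$ decomposes uniquely as $\mu_0=u+s\circ f^\ast\lambda_0'$ for a unique section $u\in\Gamma(M,K)$. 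I then want to take $Y=Z(u)\cap U$ and $E^1=s\bigl(f^\ast(L')^1\bigr)|_Y$ for a suitable open neighborhood $U$ of $Z$; the crux is to show that $u$ is regular along $Z$ and that $f|_{Z(u)}$ is a local diffeomorphism near $Z$.

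The key computation takes place at a fixed $P\in Z$. Since $\mu_0(P)=0$, both $u(P)=0$ and $f^\ast\lambda_0'(P)=0$, so $f(P)$ is classical in $\N$. Because $f$ is a submersion, $\phi$ is surjective on $H^1$, and $\phi$ restricts to an isomorphism $H^k\to f^\ast(L')^k$ for $k\geq2$, the morphism of tangent complexes at $P$ induced by $(f,\phi)$ is levelwise surjective, and its kernel complex $C_P$ is concentrated in degrees $0$ and $1$, with $C_P^0=\ker(Df_P)=:T_PM^{\mathrm{vert}}$ and $C_P^1=K|_P$. A direct computation using $D_P\mu_0=D_Pu+s\circ D_{f(P)}\lambda_0'\circ Df_P$ shows that the connecting differential $C_P^0\to C_P^1$ is precisely $D_Pu|_{T_PM^{\mathrm{vert}}}$ (the second term vanishes on $\ker Df_P$). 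The long exact sequence of cohomology then identifies the \'etale condition at $P$ with the acyclicity of $C_P$, i.e.\ with $D_Pu|_{T_PM^{\mathrm{vert}}}\colon T_PM^{\mathrm{vert}}\to K|_P$ being an isomorphism.

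From this single isomorphism I read off three consequences at each $P\in Z$: (i) $D_Pu$ is surjective, so $u$ is transversal to the zero section at $P$; (ii) $\mathrm{rank}\,K=\dim M-\dim N$; (iii) $\ker(D_Pu)\cap T_PM^{\mathrm{vert}}=0$, so $Df_P$ restricts injectively to $T_PZ(u)=\ker(D_Pu)$ and, by the dimension count in (ii), in fact isomorphically onto $T_{f(P)}N$. Transversality of $u$ and the local-diffeomorphism property of $f|_{Z(u)}$ are both open conditions, so I may shrink to an open neighborhood $U$ of $Z$ in $M$ on which $u$ is regular and $f$ restricted to $Z(u)\cap U$ is a local diffeomorphism. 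Then $Y=Z(u)\cap U$ is a submanifold with $Z\subset Y$ of dimension $\dim N$, and I set $E^1=s\bigl(f^\ast(L')^1\bigr)|_Y$.

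To verify the remaining claims: on $Y$ the section $u$ vanishes, so $\mu_0|_Y=s(f^\ast\lambda_0')|_Y\in\Gamma(Y,E^1)$; since $H$ is positively graded, for $k\geq1$ the operation $\mu_k$ sends $\Sym^k E$ into total internal degree $\geq2$, hence into $H^{\geq2}|_Y\subset E$, so $E=E^1\oplus H^{\geq2}|_Y$ is automatically closed under the $L_\infty$-structure. By construction $\phi|_{E^1}$ is the inverse of $s$ and $\phi$ is an isomorphism in degrees $\geq 2$, so $(Y,E)\to(Y,f^\ast L'|_Y)$ is a linear isomorphism; composed with $f|_Y$ this gives the desired linear local isomorphism $(Y,E)\to(N,L')$, in particular \'etale at every point of $Z$. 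The main obstacle I anticipate is the clean derivation of the kernel complex $C_P$ and the identification of its degree-$0$-to-$1$ differential with $D_Pu|_{T_PM^{\mathrm{vert}}}$; once these are in hand, the construction of $Y$ reduces to linear algebra and the implicit function theorem.
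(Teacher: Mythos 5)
Your proposal is correct and follows essentially the same route as the paper's proof: split $H^1=K\oplus\tilde E^1$ with $K=\ker\phi$, decompose the curvature as $\mu_0=u+(\text{lift of }f^\ast\lambda_0')$, identify the \'etale condition at $P\in Z$ with $Du|_P\colon\ker(Df_P)\to K|_P$ being an isomorphism via the levelwise-surjective map of tangent complexes, and take $Y=Z(u)$ near $Z$ with $E^1$ the chosen complement of $K$. The only cosmetic difference is that the paper fixes affine connections so that $\nabla u$ is defined on all of $M$ (making the openness of the isomorphism condition immediate), whereas you invoke openness of transversality directly; both are standard.
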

\begin{pf}
We have a diagram
$$\xymatrix{
M\rto^-s\dto_f & H^1\dto^\phi   \\
N\rto^-t & (L')^1}$$
Here $f:M\to N$ is a submersion, and $\phi:H^1\to f^\ast (L')^1$ an
epimorphism. We have written $s=\mu_0$ and $t=\lambda_0'$ for the respective curvatures. 

We denote by $K$  the kernel of $\phi:H^1\to f^\ast (L')^1$.  We choose 
\begin{items}
\item a linear connection on the vector bundle $(L')^1$; 
\item a retraction  $\theta:H^1\to K = \ker(\phi)$ of the inclusion 
$K\to   H^1$, giving rise to a splitting $H^1=K\oplus \tilde E^1$, where $\tilde E^1 \subset H^1$ is a subbundle such that $\phi:\tilde E^1 \to f^\ast (L')^1$ is an isomorphism;
\item a linear connection on the vector bundle $K$.
\end{items}
These data give rise to a linear connection on $H^1 \cong K \oplus f^\ast(L')^1$.

The curvature $s \in \Gamma(M,H^1)$ splits into a sum 
\begin{equation}\label{eq:DecompCurv}
s=u+f^\ast t,
\end{equation}
where $u\in \Gamma(M,K)$ is a section of $K$.

Consider the diagram of vector bundles over $M$:
\begin{equation}\label{diagram1_lastcase}
\begin{split}
\xymatrix{
T_{M/N}\dto_j \rto^{(\nabla u)\circ j}& K\dto\\
TM\rto^{\nabla s}\dto\urto^{\nabla u} & H^1\dto_\phi\ar@/^/[u]^\theta\\
f^\ast TN\rto^{f^\ast(\nabla t)} & f^\ast
(L')^1\rlap{\,.}}
\end{split}
\end{equation}
Both squares with downward pointing arrows commute. Moreover,  we have  
 $\theta\circ(\nabla s)=\nabla u$. 

Let $P\in Z$. The fact $(f,\phi)$ is \'etale at $P$ means that   in the morphism of short
exact sequences of vector spaces
\begin{equation}\label{diagram2_lastcase}
\begin{split}
\xymatrix{
T_{M/N}|_P\rto\dto & K|_P\dto\\
TM|_P\rto \dto& H^1|_P\dto\\
TN|_{\phi(P)}\rto & (L')^1|_{\phi(P)}\rlap{\,,}}
\end{split}
\end{equation}
the map $T_{M/N}|_P\to K|_P$ is an isomorphism.

Note that the composition $(\nabla u)\circ j$ is an isomorphism at $P\in Z$, because at every point in $Z$, the diagram \eqref{diagram1_lastcase} coincides with the diagram \eqref{diagram2_lastcase}.  Then $(\nabla u)\circ j$  is still an isomorphism in a neighborhood of $Z$, so we
will assume that it is true everywhere: $(\nabla u)\circ j$ is an
isomorphism.

We note that $u \in \Gamma(M, K)$ is a regular section, because at points where $u$ vanishes, the derivative $TM|_P \to K|_P$ coincides with $(\nabla u)|_P$, and $\nabla u : TM \to K$ is an epimorphism since $(\nabla u) \circ j$ is an isomorphism.

Hence, $Y=Z(u)$ is a submanifold of $M$, and we have a short exact sequence
of vector bundles $TY\to TM|_Y\to K|_Y$. Note that $Z\subset Y$.

Since $T_{M/N}|_Y$ is a complement for $TY$ in $TM|_Y$, it follows
that the composition $TY\to TM|_Y\to (f^\ast TN)|_Y$ is an isomorphism, so
that the composition $Y\to M\to N$ is \'etale. 

By definition $u|_Y=0$, so after restricting to $Y$, the curvature $s|_Y$ is contained in the subbundle $E^1 := \tilde E^1 |_Y\subset H^1|_Y$. 
\end{pf}

\begin{pfStep2} 
By Lemma~\ref{lastcase}~(ii), there exists a simple embedding $(Y,E,\nu) \hookrightarrow (U,H,\mu)$. The other properties in Lemma~\ref{lastcase} guarantee that all the requirements in Step~2 are satisfied.
\end{pfStep2}

\subsection{Step~3 and the proof of Theorem~\ref{recap}}

To prove Theorem~\ref{recap}~(2), assume that the given linear fibration $(f,\phi):(M,H,\mu)\to (N,L',\lambda')$ is trivial. 
In Lemma~\ref{lastcase}, we proved that the composition $Y \to M \to N$ is a local diffeomorphism. However, to get an isomorphism of $L_\infty$-bundles, we need to show that $f:Y \to f(Y) \subset N$ is a diffeomorphism. To achieve this step, we need a topological lemma.

We believe the following lemma is classical, but we could not find a reference. So we include a proof here for the sake of completeness. 
 
\begin{lem}\label{lem:TopoLem}
Let $f:M\to N$ be  a local diffeomorphism, and $Z\subset N$ a closed subset, with preimage $X=f^{-1}(Z)\subset M$. Assume that the induced map $X\to Z$ is injective.  Then there exists an open neighborhood $V$ of $X$ in $M$, such that $f|_V:V\to N$ is a diffeomorphism onto an open neighborhood of $Z$ in $N$. 
\end{lem}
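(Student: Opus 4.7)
The strategy is to construct a smooth section $\tilde s\colon U \to M$ of $f$ on an open neighborhood $U$ of $Z$ in $N$ extending the set-theoretic section $s := (f|_X)^{-1}$; then $V := \tilde s(U)$ will be the required open neighborhood of $X$, since $\tilde s$ is automatically a smooth open embedding (being locally the inverse of a diffeomorphism). After replacing $N$ with the open subset $f(M)$, we may assume $f(X) = Z$. The partial section $s$ is continuous: since $f$ is open and $X = f^{-1}(Z)$, for any open $W \subset M$ one has $f(W \cap X) = f(W) \cap Z$---the nontrivial inclusion $\supseteq$ uses that any preimage of a point of $Z$ automatically lies in $X$ and, by injectivity of $f|_X$, is unique. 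Hence $f|_X\colon X \to Z$ is a continuous open bijection, therefore a homeomorphism.

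For each $x \in X$, pick an open $W_x \ni x$ on which $f|_{W_x}$ is a diffeomorphism onto an open $V_x := f(W_x) \subset N$; using a Riemannian metric on $M$, I would take $W_x$ to be a metric ball of continuously varying positive radius. Set $\sigma_x := (f|_{W_x})^{-1}$. A short verification shows that $\sigma_x$ automatically agrees with $s$ on $V_x \cap Z$: for $q \in V_x \cap Z$, $\sigma_x(q) \in W_x \cap f^{-1}(Z) = W_x \cap X$ equals $s(q)$ by uniqueness of preimages in $X$. Since $f$ is étale, two smooth local sections agreeing at one point agree in an open neighborhood, so the set $\{q \in V_x \cap V_{x'} : \sigma_x(q) = \sigma_{x'}(q)\}$ is clopen in $V_x \cap V_{x'}$ and contains $V_x \cap V_{x'} \cap Z$; hence $\sigma_x$ and $\sigma_{x'}$ agree on every connected component of $V_x \cap V_{x'}$ that meets $Z$.

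The main obstacle is that connected components of $V_x \cap V_{x'}$ disjoint from $Z$ can support disagreement of $\sigma_x$ and $\sigma_{x'}$, preventing direct gluing. Let $\mathrm{Bad}_x \subset V_x$ denote the union, over $x' \in X$, of such bad components; it is open in $V_x$. The crucial claim is that $f(x) \notin \overline{\mathrm{Bad}_x}$. Indeed, if $q_n \in \mathrm{Bad}_x$ converged to $f(x)$, writing $q_n \in V_{x'_n}$ with $\sigma_x(q_n) \neq \sigma_{x'_n}(q_n)$, the bounded size of $W_{x'_n}$ together with $q_n \to f(x)$ forces $f(x'_n) \to f(x)$, hence $x'_n = s(f(x'_n)) \to x$ by continuity of $s$; but then $\sigma_x(q_n)$ and $\sigma_{x'_n}(q_n)$ would both lie in an arbitrarily small diffeomorphism chart around $x$ for large $n$, forcing them to coincide---a contradiction. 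Replacing each $V_x$ by a smaller open $V_x^\star \ni f(x)$ disjoint from $\overline{\mathrm{Bad}_x}$, the restricted sections $\sigma_x|_{V_x^\star}$ become pairwise compatible and glue to a smooth section $\tilde s\colon U \to M$ on $U := \bigcup_x V_x^\star \supset Z$; setting $V := \tilde s(U)$ completes the proof.
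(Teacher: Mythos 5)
Your strategy---building a global smooth section of $f$ over a neighborhood of $Z$ by gluing the local inverses $\sigma_x$ after excising the components where they disagree---is genuinely different from the paper's argument, which never constructs a section: it proves directly that $f$ is injective on a suitable neighborhood of $X$, first for compact $Z$ by a shrinking-neighborhoods/subsequence argument, then in general by patching over a locally finite, relatively compact cover of $N$. Much of your scaffolding is correct: the continuity of $s=(f|_X)^{-1}$ via openness of $f$ and $X=f^{-1}(Z)$, the clopenness of the agreement locus of two local sections, and the final gluing over the $V_x^\star$ all check out. (Like the paper, you implicitly assume $Z\subset f(M)$; in the intended application this holds because $X\to Z$ is onto.)

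There is, however, a genuine gap at the one step where closedness of $Z$ and injectivity of $f|_X$ must do real work, namely the claim $f(x)\notin\overline{\mathrm{Bad}_x}$. You assert that the bounded size of $W_{x'_n}$ together with $q_n\to f(x)$ forces $f(x'_n)\to f(x)$. It does not: if, say, $\operatorname{diam} f(W_{x'})\le 1$ for all $x'$, then $q_n\in f(W_{x'_n})$ only yields $d\bigl(f(x'_n),f(x)\bigr)\le 1+o(1)$, which is a bound, not convergence; and there is no admissible choice of radii making $\operatorname{diam} f(W_{x'_n})\to 0$ along an arbitrary, a priori unknown sequence $x'_n\in X$. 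The step can be repaired, but it needs more than is written: bound $\operatorname{diam} f(W_{x'})$ uniformly with respect to a complete metric on $N$, so that the points $f(x'_n)$ stay in a compact subset of the closed set $Z$; extract a subsequence with $f(x'_n)\to z\in Z$, whence $x'_n\to s(z)\in X$ by continuity of $s$ (note the limit need not be $x$); choose the $W_{x'}$ relatively compact and continuously varying so that the preimages $p_n=\sigma_{x'_n}(q_n)$ are eventually confined to a relatively compact set; and finally observe that any limit point $p$ of $(p_n)$ satisfies $f(p)=f(x)\in Z$, hence $p\in X=f^{-1}(Z)$, hence $p=x$ by injectivity of $f|_X$. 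Only at that point do $p_n$ and $\sigma_x(q_n)$ land in a common injectivity neighborhood of $x$ with equal images $q_n$, producing the desired contradiction. As written, the crucial claim is unsupported, so the proof is incomplete, though the route is salvageable.
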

\begin{pf}
It suffices to prove that there exists an open neighborhood $V$ of $X$ in $M$, such that $f|_V:V\to N$ is injective. 

{\bf Case I\@. } $Z$ is compact.

Choose a sequence of relatively compact open neighborhoods $V_1\supset V_2\supset\ldots$ of $X$ in $M$, such that 
$$\bigcap_i V_i=X\,.$$
We claim that there exists an $i$, such that $f$ is injective when restricted to $V_i$. If not, choose in every $V_i$ a pair of points $(P_i,Q_i)$, such that $f(P_i)=f(Q_i)$. Upon replacing $(P_i,Q_i)$ by a subsequence, we may assume that $\lim_{i\to\infty} P_i=P$, and $\lim_{i\to\infty}Q_i=Q$. We have $P$ and $Q\in X$, and since $f$ is continuous, $f(P)=f(Q)$.  This implies that $P=Q$. Let $V$ be a neighborhood of $P=Q$ in $M$, such that $f|_V$ is injective.  For sufficiently large $i$, both $P_i$ and $Q_i$ are in $V$, which is a contradiction.

{\bf Case II\@. } General case. 

Let $(U_i)_{i\in I}$ be a locally finite open cover of $N$, such that all $U_i$ are relatively compact. It follows that, for every $i\in I$, the set
$$I_i=\{j\in I\st  U_j\cap U_i\not=\varnothing\}$$
is finite.  Hence,
$$\tilde U_i=\bigcup_{j\in I_i}U_j$$
is still relatively compact, and by Case~I, there exists an open neighborhood $\tilde V_i$ of $X\cap f^{-1}(\tilde U_i)$ on which $f$ is injective. We may assume that $\tilde V_i\subset f^{-1} (\tilde U_i)$. Define 
$$V_i= f^{-1}(U_i) \cap\bigcap_{j\in I_i}\tilde V_j\,.$$
This is an open subset of $M$.  If $P\in X$, such that $f(P)\in U_i$, then $f(P)\in \tilde U_j$, for all $j\in I_i$.  Hence, $P\in f^{-1}(\tilde U_j)\subset \tilde V_j$, for all $j\in I_i$, and so $P\in  V_i$. Thus, $X\cap f^{-1}(U_i)\subset V_i$.

We define
$$V=\bigcup_{i\in I} V_i\,.$$
Then $V$ is an open neighborhood of $X$ in $M$. We claim that $f$ is injective on $V$. Let $P,Q\in V$ be any two points such that $f(P)=f(Q)$. Suppose $P\in V_i$ and $Q\in V_j$. Then $f(P)\in U_i$, and $f(Q)\in U_j$, so that $U_i$ and $U_j$ intersect. Hence $i\in I_j$, and therefore $V_j\subset \tilde V_i$. Since we have $V_i\subset\tilde V_i$, we have that both $P,Q\in \tilde V_i$. Since $f$ is injective on $\tilde V_i$, we conclude that $P=Q$. 
\end{pf}

Now, we are ready to complete the proof of Theorem~\ref{recap}.

\begin{pfrecap}
We have constructed a sequence of transfer embeddings $(U,H,\mu) \to \cdots \to (M,L,\lambda)$ and a simple embedding $(Y,E,\nu) \hookrightarrow (U,H,\mu)$ according to Lemma~\ref{firstcase} and Lemma~\ref{lastcase}. This confirms the assertion~(1) in Theorem~\ref{recap}. 

To prove Theorem~\ref{recap}~(2), we need the following observation from the proof of Lemma~\ref{lastcase}: 
Let $X$ be the preimage of $f(Z)$ under $f$ in $Y=Z(u)$. 
If $(f,\phi)$ is a trivial fibration,  then the restriction $f:X \to f(Z)$ is a bijection because of Decomposition \eqref{eq:DecompCurv}. 

Thus, the underlying smooth map $f:Y  \to N$ satisfies the assumption in Lemma~\ref{lem:TopoLem}. Consequently, there exists an open neighborhood $V$ of $Z$ in $Y$ such that $(f,\phi):(Y,E,\nu)|_V \to (N,L',\lambda')|_{f(V)}$ is a linear isomorphism of $L_\infty$-bundles. This completes the proof.
\end{pfrecap}

\section{Inverse function theorem and the homotopy category of $L_\infty$-bundles} 

One important application of Theorem~\ref{recap} is the inverse function theorem for $L_\infty$-bundles (Theorem~\ref{ift}), which allows us to give a simple description of the homotopy category of $L_\infty$-bundles. 

\subsection{The inverse function theorem for $L_\infty$-bundles}

\begin{thm}\label{ift}
Let $\M=(M,L,\lambda)$ and $\N=(N,L',\lambda')$ be $L_\infty$-bundles, and
$(f,\phi):\M\to\N$ a fibration.  Let $Z\subset \pi_0(\M)$ be a subset
of points at which $(f,\phi)$ is \'etale. Then
there exists  \eetale \
  of manifolds  $Y\to N$ and a commutative diagram
$$\xymatrix{
Z\rto\dto & \M\dto\\
\Y\rto\urto & \N\rlap{\,.}}$$
Here $\Y$ is the pullback of  $(L',\lambda')$ via $Y\to N$, and
the lower triangle consists of   morphisms of $L_\infty$-bundles.   

If, moreover,  $\pi_0(\M) \to\pi_0(\N)$ is injective, we can choose $Y$ to be an open
 submanifold  of $N$.
\end{thm}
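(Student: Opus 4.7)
My plan is to derive Theorem~\ref{ift} as a direct application of Theorem~\ref{recap} combined with Lemma~\ref{lem:TopoLem}. First, using Proposition~\ref{prop:IsoToLinearMor} I factor $(f,\phi)$ as an isomorphism followed by a linear fibration; since isomorphisms preserve classical loci and étale points and clearly allow the desired diagram to be pulled back, I may assume from the outset that $(f,\phi)$ is linear.

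I then apply Theorem~\ref{recap}(1) to $Z\subset\pi_0(\M)$, which produces a finite composition of transfer embeddings $\iota:(U,H,\mu)\to\cdots\to(M,L,\lambda)$ and a simple subbundle $s:(Y,E,\nu)\hookrightarrow(U,H,\mu)$ with $Z\subset Y$, such that the induced morphism $(Y,E,\nu)\to(N,L',\lambda')|_{f(Y)}$ is a linear local isomorphism. Setting $\Y=(Y,E,\nu)$, the underlying map $Y\to N$ is a local diffeomorphism, and because $\phi_1|_Y:E\to f^\ast L'$ is a fiberwise isomorphism, $E$ is canonically identified with the pullback of $L'$ along $Y\to N$; thus $\Y$ is the pullback claimed in the statement. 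I define $\Y\to\M$ as $\iota\circ s$ and $\Y\to\N$ as the linear local isomorphism composed with $\N|_{f(Y)}\hookrightarrow\N$. Commutativity of the outer square is built into the output of Theorem~\ref{recap}, and the upper triangle $Z\to\Y\to\M$ commutes because $\iota\circ s$ is the inclusion $Y\hookrightarrow M$ on base manifolds and $Z\subset Y$.

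For the ``moreover'' assertion, I want to apply Lemma~\ref{lem:TopoLem} to the local diffeomorphism $f|_Y:Y\to N$ with closed subset $\pi_0(\N)=(\lambda_0')^{-1}(0)\subset N$. The key bookkeeping is that transfer embeddings are weak equivalences (Proposition~\ref{prop:TransInclWkEq}), so $\pi_0(U,H,\mu)=\pi_0(\M)\cap U$, and for the simple subbundle one checks directly that $\pi_0(\Y)=Y\cap\pi_0(U,H,\mu)\subset\pi_0(\M)$. Using that $\phi_1|_Y$ is a fiberwise isomorphism, one verifies the reverse inclusion in $f^{-1}(\pi_0(\N))\cap Y=\pi_0(\Y)$. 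Hence the injectivity of $\pi_0(\M)\to\pi_0(\N)$ forces $\pi_0(\Y)\to\pi_0(\N)$ to be injective, and the topological lemma yields an open neighborhood $V$ of $\pi_0(\Y)$ in $Y$ mapping diffeomorphically onto an open neighborhood $W$ of $\pi_0(\N)$ in $N$. Since $Z\subset\pi_0(\Y)\subset V$, I may replace $Y$ with $V$ and transport $\Y$ and the two morphisms across the diffeomorphism $V\xrightarrow{\sim}W$ to obtain the desired open submanifold of $N$.

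The main obstacle is the last bit of bookkeeping: identifying $\pi_0(\Y)$ both as a subset of $\pi_0(\M)$ (to use the injectivity hypothesis) and as the full preimage $f^{-1}(\pi_0(\N))\cap Y$ (to trigger Lemma~\ref{lem:TopoLem}). Beyond that, the argument is essentially a clean assembly of Theorem~\ref{recap}, Proposition~\ref{prop:TransInclWkEq}, and the topological lemma.
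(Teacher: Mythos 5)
Your proposal is correct and follows essentially the same route as the paper: reduce to a linear fibration, apply Theorem~\ref{recap} to produce $\Y=(Y,E,\nu)\cong(Y,f^\ast L',f^\ast\lambda')$ with a local diffeomorphism $Y\to N$, and then use the decomposition $\mu_0=u+f^\ast\lambda_0'$ on $Y=Z(u)$ to identify $f^{-1}(\pi_0(\N))\cap Y$ with $\pi_0(\Y)\subset\pi_0(\M)$ so that Lemma~\ref{lem:TopoLem} applies. The bookkeeping you single out as the main obstacle is exactly the content of Remark~\ref{rmk:TopoLem} in the paper, and your verification of it is sound.
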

\begin{pf}
Without loss of generality, we may assume that the fibration $(f,\phi):\M\to\N$ is linear, i.e. $\phi=\phi_1$. 
By applying Theorem~\ref{recap} to $(f,\phi)$, we obtain an $L_\infty$-bundle $\Y =(Y,E,\nu) \cong (Y, f^\ast L', f^\ast \lambda')$ together with a local diffeomorphism $Y \to N$ and a morphism of $L_\infty$-bundles $\Y \to \M$. This $L_\infty$-bundle $\Y$ proves the first part of Theorem~\ref{ift}.

To finish the proof, we now assume that $\pi_0(\M) \to \pi_0(\N)$ is injective. Since $Y$ was chosen to be the zero locus of $u$ in Lemma~\ref{lastcase}, it follows from Equation~\eqref{eq:DecompCurv} that $f^{-1}\big(Z(t)\big) \cap Y \subset Z(s)$, and thus $f$ maps $f^{-1}\big(Z(t)\big) \cap Y$ injectively to $Z(t)$. Therefore, by Lemma~\ref{lem:TopoLem}, the space $Y$ can be chosen to be an open submanifold of $N$. This concludes the proof.
\end{pf}

\begin{cor}\label{cor:LocalSec}
\begin{items}
\item
A fibration admits local sections through every point at which it is \'etale.
\item
Every trivial fibration of $L_\infty$-bundles admits a section, after
restricting the target to an open neighborhood of its classical
locus. 
\end{items}
\end{cor}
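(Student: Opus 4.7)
The plan is to derive both assertions as direct applications of Theorem~\ref{ift}.

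For the first assertion, let $(f,\phi):\M\to\N$ be a fibration and $P\in\pi_0(\M)$ a classical point at which $(f,\phi)$ is \'etale. I would apply Theorem~\ref{ift} with $Z=\{P\}$, producing a local diffeomorphism $g:Y\to N$ together with a morphism of $L_\infty$-bundles $\Y=(Y,g^\ast L',g^\ast\lambda')\to\M$ fitting into the commutative triangle, and sending $P$ to some preimage $Q\in Y$ of $f(P)$. Since $g$ is a local diffeomorphism, after shrinking $Y$ to a sufficiently small open neighborhood of $Q$, I may assume $g$ restricts to an open embedding onto a neighborhood $V$ of $f(P)$ in $N$. Then $\Y$ is tautologically identified with the restriction $\N|_V$, and the induced morphism $\N|_V\to\M$ is the desired local section of $(f,\phi)$ through $P$.

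For the second assertion, suppose $(f,\phi)$ is a trivial fibration, so it is \'etale at every classical point and the induced map $\pi_0(\M)\to\pi_0(\N)$ is a bijection. Since this map is in particular injective, the "moreover" clause of Theorem~\ref{ift} applies with $Z=\pi_0(\M)$, yielding an open submanifold $Y\subset N$ and a morphism $\Y=\N|_Y\to\M$ whose composition with $(f,\phi)$ is the inclusion $\N|_Y\hookrightarrow\N$. It only remains to verify that $Y$ contains $\pi_0(\N)$: commutativity of the triangle forces the lift $\pi_0(\M)\to Y\subset N$ to coincide with $f|_{\pi_0(\M)}$, and since $\pi_0(\M)\to\pi_0(\N)$ is a bijection, the entire classical locus $\pi_0(\N)$ sits inside $Y$. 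The morphism $\N|_Y\to\M$ is therefore a section of $(f,\phi)$ defined on an open neighborhood of the classical locus.

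I do not anticipate any serious obstacle: both parts amount to unpacking Theorem~\ref{ift} in the appropriate setting. The only mild subtleties are the tautological identification of the pullback $\Y$ with a restriction $\N|_V$ (or $\N|_Y$) whenever the underlying \'etale map is already an open embedding, and the bookkeeping in the trivial-fibration case that this open $Y$ automatically covers all of $\pi_0(\N)$.
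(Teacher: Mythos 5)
Your proposal is correct and follows exactly the paper's own route: the paper proves the corollary by applying Theorem~\ref{ift} with $Z$ a single point for the first assertion and $Z=\pi_0(\M)$ (using the ``moreover'' clause) for the second. You simply spell out the details the paper leaves implicit, namely the identification of the pullback $\Y$ with a restriction of $\N$ once the local diffeomorphism is shrunk to an open embedding, and the check that $Y$ contains $\pi_0(\N)$ in the trivial-fibration case.
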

\begin{pf}
Apply the theorem either with $Z$ a single point or with $Z = \pi_{0}(\M)$.
\end{pf}

\begin{rmk}
Let $\M$ and $\N$ be smooth manifolds (i.e. $L = M \times \{0\}$ and $L' = N \times \{0\}$). Then a morphism $(f,\id):\M \to \N$ is \'etale at $P$ if and only if the tangent map of $f$ is an isomorphism at $P$. In this extreme case, Corollary~\ref{cor:LocalSec} reduces to the inverse function theorem for smooth manifolds.
\end{rmk}

\subsection{Remarks on the homotopy category}

In this section, we study  a special type of categories of fibrant objects (see Property~\eqref{eq:CFOProperty}) and their homotopy categories.

Let $\cC$ be an arbitrary category of fibrant objects. The \textbf{homotopy category} $\Ho(\cC)$ of $\cC$ is the localization of $\cC$ at the  weak equivalences.  It follows from Brown's lemma (every weak equivalence can be factored into a composition of a trivial fibration and a section of a trivial fibration) that  $\Ho(\cC)$ is same as the localization at the  trivial fibrations.

In general, it is complicated to describe morphisms in a localization by an arbitrary class $W$ of morphisms. Nevertheless, if $W$ admits a calculus of fractions (see \cite{MR0210125}), then there is a simple description of the localization in terms of spans.  In \cite{MR341469}, Brown first approximated $\Ho(\cC)$ by another category $\pi\cC$ and then showed that the weak equivalences in $\pi\cC$ have a calculus of right fractions. In this way, he obtained a more explicit description of $\Ho(\cC)$. He also remarked in \cite{MR341469}, that one can replace weak equivalences by trivial fibrations in his description of $\Ho(\cC)$.

 More explicitly, recall that a {\bf path space object} for an object
 $Y$ in $\cC$ is an object $PY$ together with morphisms $Y \xrightarrow{s} PY \xrightarrow{e} Y \times Y$, where $s$ is a weak equivalence, $e$ is a fibration, and the composition $e \circ s$ is the diagonal morphism. The fibration $e:PY \to Y \times Y$ decomposes into two evaluation maps: $\ev_0 = \pr_1 \circ e$ and $\ev_1 = \pr_2 \circ e$. 
 
\begin{defn}[\cite{MR341469}] 
Two morphisms $f,g: X \to Y$ are \textbf{homotopic}, denoted by $f\simeq g$,
 if for some path space object $PY$ for $Y$, there exists a morphism $h:X \to PY$, such that $f= \ev_0 \circ h$ and $g = \ev_1 \circ h$. 
In this case,  $h$ is called a \textbf{homotopy} from $f$ to $g$.
\end{defn}

\begin{lem}\label{sect:triv}
Let $s$ be a section of a trivial fibration $t:X'\to X$.  Then $st\simeq 1$. 
\end{lem}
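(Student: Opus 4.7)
The plan is to produce a right homotopy between $st$ and $1_{X'}$ by working in the slice category $\cC/X$, where $X'$ is viewed as an object over $X$ via the trivial fibration $t$. The key preliminary observation is that both maps $st,1_{X'}:X'\to X'$ become morphisms in $\cC/X$, since $t\circ st=(ts)\circ t=t=t\circ 1_{X'}$. Since the slice of a category of fibrant objects is again a category of fibrant objects, and since the forgetful functor $\cC/X\to \cC$ preserves path-space objects, a right homotopy $st\simeq 1_{X'}$ in $\cC/X$ will yield the desired right homotopy in $\cC$.

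To construct the homotopy, I first form the fiber product $X'\times_X X'$ (with respect to $t$ on both factors) and consider the two maps
\[
\Delta=(1_{X'},1_{X'}),\ (st,1_{X'})\ :\ X'\ \longrightarrow\ X'\times_X X'.
\]
Both are sections of the second projection $\pi_2:X'\times_X X'\to X'$, which is a trivial fibration because it is the pullback of the trivial fibration $t$ along itself. In particular, both sections are weak equivalences by the two-out-of-three property. Since $\pi_1\circ\Delta=1_{X'}$ and $\pi_1\circ(st,1_{X'})=st$, it suffices to show that the two sections $\Delta$ and $(st,1_{X'})$ are right-homotopic; postcomposing the homotopy with $\pi_1$ will give the result.

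The main step is therefore to prove that any two sections of a trivial fibration are right-homotopic. I would do this by factoring the relative diagonal $\Delta:X'\to X'\times_X X'$, viewed as a morphism in $\cC/X$, as a weak equivalence $\sigma$ followed by a fibration $(e_0,e_1)$, obtaining a relative path-space object
\[
X'\ \xrightarrow{\ \sigma\ }\ P_{X'/X}\ \xrightarrow{(e_0,e_1)}\ X'\times_X X'.
\]
By two-out-of-three applied to $e_i\circ\sigma=1_{X'}$, each $e_i:P_{X'/X}\to X'$ is itself a trivial fibration. The homotopy $h:X'\to P_{X'/X}$ is then produced by lifting $(st,1_{X'})$ through $(e_0,e_1)$.

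The hard part is precisely this lifting step, since in a general category of fibrant objects one cannot lift arbitrary maps through fibrations. What saves us is that $(st,1_{X'})$ and $\sigma$ are both sections of the trivial fibration $\pi_2$, so the lifting problem is one of comparing two sections of the same trivial fibration; this is the content of Brown's standard fiberwise argument. Concretely, using that $\pi_2\circ (st,1_{X'})=1_{X'}=\pi_2\circ e\circ\sigma$, one carries out the lift by pulling back $(e_0,e_1)$ along $(st,1_{X'})$ to obtain a trivial fibration over $X'$ and invoking the existence of local sections of trivial fibrations (compare Corollary~\ref{cor:LocalSec} in the geometric setting, and the analogous abstract result in the CFO setting). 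Once $h$ is constructed, postcomposing with $\pi_1$ produces the right homotopy $st\simeq 1_{X'}$, as required.
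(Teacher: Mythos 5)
Your first paragraph contains the right germ --- both $st\times 1$ and the diagonal factor through $X'\times_X X'$ because $tst=t$, and $\pi_2:X'\times_X X'\to X'$ is a trivial fibration --- but the argument then takes a wrong turn, and the lifting step at the end is a genuine gap. In an arbitrary category of fibrant objects one cannot lift a map through a fibration, and you cannot repair this by invoking ``the existence of sections of trivial fibrations'': that existence is exactly Property~($\ast$), which is \emph{not} assumed where Lemma~\ref{sect:triv} is stated and proved (the lemma is established for a general category of fibrant objects, before ($\ast$) enters the discussion; Corollary~\ref{cor:LocalSec} is a statement about $L_\infty$-bundles, and there is no analogous result in a general CFO --- failure of ($\ast$) is the generic situation, which is why it is singled out as an extra hypothesis). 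A second, smaller problem is your claim that the forgetful functor $\cC/X\to\cC$ preserves path-space objects: a relative path object fibers over $X'\times_X X'$, and $X'\times_X X'\to X'\times X'$ is a pullback of the diagonal $X\to X\times X$, which need not be a fibration; so $P_{X'/X}\to X'\times X'$ need not be a fibration, and $P_{X'/X}$ need not be a path space object for $X'$ in $\cC$ in the sense required by the definition of $\simeq$.

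The paper's proof avoids both issues and needs no lifting at all. Factor the canonical map $X'\times_X X'\to X'\times X'$ as a weak equivalence $w$ followed by a fibration $f$, with middle object $PX'$. Since $\Delta:X'\to X'\times_X X'$ is a weak equivalence (being a section of the trivial fibration $\pi_2$), the composite $w\circ\Delta$ is a weak equivalence, $f$ is a fibration, and $f\circ w\circ\Delta$ is the diagonal, so $PX'$ is a path space object for $X'$ in $\cC$. Now $h:=w\circ(st\times 1):X'\to PX'$ is already the desired homotopy: $\ev_0\circ h=st$ and $\ev_1\circ h=1$. The factorization map $w$ itself plays the role of the lift you were trying to manufacture, so the argument closes without ever needing a section of a trivial fibration.
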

\begin{pf}
The diagonal morphism $X'\to X'\times_X X'$ is a weak equivalence. Hence, when we factor the canonical map $X'\times_X X'\to X'\times X'$ as a weak equivalence $w$, followed by a fibration $f$, we obtain a path space for $X'$:
$$\begin{tikzcd}
&& PX'\arrow[dr,"f"]\\
X'\arrow[r,"\Delta"] & X'\times_X X'\arrow[rr]\arrow[ur,"w"]&&X'\times X'\rlap{\,.}
\end{tikzcd}$$ 
The morphism $st\times 1:X'\to X'\times X'$ factors through $X'\times_X X'$, because $tst=t$. So we can replace $\Delta$ by $st\times 1$ in the above diagram:
$$\begin{tikzcd}
&& PX'\arrow[dr,"f"]\\
X'\arrow[r,"st\times1"] & X'\times_X X'\arrow[rr]\arrow[ur,"w"]&&X'\times X'\rlap{\,,}
\end{tikzcd}$$
and we see that, indeed, $st\simeq1$. 
\end{pf}

Let $\pi\cC$ be the category with the same objects  as $\cC$ and with morphisms 
$$\Mor_{\pi \cC}(X,Y) = \Mor(X,Y)\big/\approx\, ,$$ 
where $f \approx g$ if and only if there exists a {weak equivalence} $t:X' \to X$ such that $f \circ t \simeq g \circ t$. The relation $\approx$ is an equivalence relation, and if $f\approx g$, then $f$ is a weak equivalence if and only if $g$ is.  Thus the category $\pi\cC$ inherits the notion of weak equivalence.  Brown \cite{MR341469} proved that $\pi\cC$ admits a right calculus of fractions with respect to weak equivalence, and that the category of right fractions is the homotopy category of $\cC$.

We will now assume that $\cC$ satisfies the following property:
\begin{equation}\tag{\textasteriskcentered }\label{eq:CFOProperty}
\text{Every trivial fibration admits a section.}
\end{equation}

A first consequence of this condition is the following.

\begin{lem}\label{first:cond}
In a category of fibrant objects satisfying {\rm ($\ast$)},  $f\simeq g$ implies $ uf\simeq ug$ and 
$fv\simeq gv$, whenever these compositions make sense. 
\end{lem}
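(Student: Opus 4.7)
The easy compatibility, $fv \simeq gv$, does not use the hypothesis $(\ast)$. If $h: X \to \P Y$ is a homotopy from $f$ to $g$, so that $\ev_0 \circ h = f$ and $\ev_1 \circ h = g$, then for any $v: W \to X$, the composite $h \circ v: W \to \P Y$ is a homotopy from $fv$ to $gv$ with respect to the same path space object $\P Y$. I would dispose of this case in one line at the beginning of the proof.

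The substantive content is $uf \simeq ug$. My plan is to choose a path space object $Z \xrightarrow{s_Z} \P Z \xrightarrow{e_Z} Z \times Z$ for $Z$ and produce a morphism $\bar h: X \to \P Z$ with $\ev_0 \bar h = uf$ and $\ev_1 \bar h = ug$. Since $(uf, ug) = (u \times u) \circ e_Y \circ h$ factors through $\P Y$, it is natural to first build a morphism $\phi:\P Y \to \P Z$ making the square
\[
\begin{tikzcd}
\P Y \arrow[r,"\phi"] \arrow[d,"e_Y"'] & \P Z \arrow[d,"e_Z"] \\
Y \times Y \arrow[r,"u\times u"] & Z \times Z
\end{tikzcd}
\]
commute, and then set $\bar h := \phi \circ h$. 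Such a $\phi$ is exactly a section of the pullback fibration $q: Q := \P Z \times_{Z \times Z} \P Y \to \P Y$ obtained by pulling back $e_Z$ along $(u \times u) \circ e_Y$.

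To produce the required section I would exploit $(\ast)$ through the trivial fibrations obtained from the evaluations $\ev_i: \P Z \to Z$ (these are trivial fibrations, being fibrations by the path-space axiom and weak equivalences by 2-out-of-3 applied to $\ev_i \circ s_Z = \id_Z$). The key intermediate object is the mapping path space $M_u := Y \times_{Z, u, \ev_0} \P Z$; the projection $M_u \to Y$ is a pullback of the trivial fibration $\ev_0$, so it is itself a trivial fibration and, by $(\ast)$, admits a section $\sigma: Y \to M_u$. Composing with the other projection yields $\tau: Y \to \P Z$ with $\ev_0 \tau = u$, providing a strict ``$u$ on paths'' at the source endpoint. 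An analogous construction at the target endpoint, pulled back over $\P Y$ and assembled using Lemma~\ref{sect:triv} to reconcile the two partial lifts, should furnish the section $\phi$ of $q$.

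The hard part I expect is ensuring that the two ends of the resulting path in $\P Z$ coincide \emph{on the nose} with $uf$ and $ug$, rather than merely up to a further homotopy; this is precisely where $(\ast)$ is indispensable, because it supplies strict sections of trivial fibrations and not merely homotopy inverses. If the direct construction of $\phi$ runs into coherence trouble, a fallback is to pull $q$ back along $h$ first, work over $X$, and apply $(\ast)$ to the trivial fibration over $X$ manufactured out of the source- and target-endpoint sections together with Lemma~\ref{sect:triv}.
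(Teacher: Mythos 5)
Your treatment of $fv\simeq gv$ is exactly the paper's (the paper dismisses it as trivial), but your plan for $uf\simeq ug$ has a genuine gap. The morphism $q:\P Z\times_{Z\times Z}\P Y\to \P Y$ is the pullback of the fibration $e_Z:\P Z\to Z\times Z$, which is a fibration but \emph{not} a weak equivalence; hence $q$ is not a trivial fibration and Property~($\ast$) gives you no section of it. The strict lift $\phi:\P Y\to\P Z$ with $e_Z\circ\phi=(u\times u)\circ e_Y$ that you are after is precisely a strict functoriality of path objects, which fails in a general category of fibrant objects; your mapping-path-space construction $M_u=Y\times_{Z,u,\ev_0}\P Z$ only pins down the $\ev_0$-endpoint (you get $\tau$ with $\ev_0\tau=u$ but $\ev_1\tau$ is merely homotopic to $u$), and the ``reconciliation'' of the two partial lifts via Lemma~\ref{sect:triv} is exactly the unresolved crux, as you yourself flag. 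The fallback of pulling $q$ back along $h$ has the same defect: the resulting fibration over $X$ is a pullback of $e_Z$, again not trivial, so ($\ast$) still does not apply.

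The paper's proof sidesteps all of this in two lines: it invokes Brown's Proposition~1(ii) from \cite{MR341469}, which asserts that $f\simeq g$ implies the existence of a trivial fibration $x$ with $ufx\simeq ugx$ (this is where the genuine homotopical work is done, and it is done once and for all in Brown's paper); then Property~($\ast$) supplies a section $\sigma$ of $x$, and the easy direction $fv\simeq gv$ applied with $v=\sigma$ yields $uf=ufx\sigma\simeq ugx\sigma=ug$. If you want a self-contained argument you would need to reprove Brown's Proposition~1(ii), not construct a strict $\phi$; I recommend restructuring along the paper's lines.
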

\begin{pf}
The claim $fv\simeq gv$ is trivial.  By \cite[Proposition~1~(ii)]{MR341469},  the assumption $f\simeq g$ implies that for every morphism $u$, such that $uf$ and $ug$ are defined, there exists a trivial fibration $x$, such that $ufx\simeq ugx$.  Precomposing with a section of $x$ gives the required result $uf\simeq ug$. 
\end{pf}

\begin{lem}\label{fracc}
Under the assumption $\text{\rm($\ast$)}$, every right fraction is equivalent to a fraction with an identity in the denominator. Thus, every morphism in $\Ho(\cC)$ can be represented by a morphism in $\cC$, with $f$ and $g$ defining the same morphism in $\Ho(\cC)$, if and only if $f\approx g$. 
\end{lem}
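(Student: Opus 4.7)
The plan is to first reduce to a right fraction whose denominator is a trivial fibration, then use $(\ast)$ to produce a section and exhibit the desired equivalence of fractions by a short roof. By Brown's observation, recalled just above the lemma, the homotopy category $\Ho(\cC)$ has a calculus of right fractions in $\pi\cC$ with respect to trivial fibrations; hence it suffices to show that every right fraction of the form
$$\begin{tikzcd}
& X'\arrow[dl,"t"']\arrow[dr,"f"] \\
X && Y
\end{tikzcd}$$
with $t$ a trivial fibration is equivalent to one with $1_X$ in place of $t$.

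By $(\ast)$, the trivial fibration $t$ admits a section $\sigma:X\to X'$, so $t\sigma=1_X$. Set $\tilde f:=f\sigma:X\to Y$. To check that the fractions $(t,f)$ and $(1_X,\tilde f)$ are equivalent in $\pi\cC$, I would take the common roof $X_0=X'$ equipped with $u_1=1_{X'}:X'\to X'$ and $u_2=t:X'\to X$. The legs into $X$ agree on the nose, since $t\circ u_1=t=1_X\circ u_2$. For the legs into $Y$, we have $f\circ u_1=f$ and $\tilde f\circ u_2=f\sigma t$; by Lemma~\ref{sect:triv} we know $\sigma t\simeq 1_{X'}$, and Lemma~\ref{first:cond} (which relies on $(\ast)$) lets us post-compose with $f$ to conclude $f\sigma t\simeq f$. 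Homotopy implies $\approx$ (take the witnessing weak equivalence to be the identity), so $f\circ u_1\approx \tilde f\circ u_2$ in $\pi\cC$, as required.

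For the second assertion, two morphisms $f,g:X\to Y$ define the same morphism in $\Ho(\cC)$ precisely when the fractions $(1_X,f)$ and $(1_X,g)$ are equivalent. Unpacking the definition of equivalence of right fractions in $\pi\cC$: we need $X_0$ with $u_1,u_2:X_0\to X$ such that $1_X\circ u_1=1_X\circ u_2$ (which forces $u_1=u_2=:u$, and $u$ must be a weak equivalence since it is the common denominator of a fraction) and $fu\approx gu$. Unfolding $\approx$ one further step produces a weak equivalence $v$ with $fuv\simeq guv$; the composite $uv$ is a weak equivalence by two-out-of-three, and witnesses $f\approx g$. The converse direction is immediate from the definitions.

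The main technical point to watch is the two-layered nature of the relations: the homotopy relation $\simeq$ on $\cC$, the coarser relation $\approx$ used to define $\pi\cC$, and the equivalence of fractions in $\pi\cC$ on top of that. The pivotal step is producing a genuine \emph{equality} $t\circ u_1=1_X\circ u_2$ of the denominator legs (rather than merely up to homotopy); this is where the choice $u_2=t$ is essential and where the hypothesis $(\ast)$ does its work, by furnishing the section $\sigma$ that makes $\tilde f=f\sigma$ an actual morphism of $\cC$ rather than only a morphism of $\Ho(\cC)$.
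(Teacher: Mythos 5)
Your proof is correct, and it reaches the same destination by a slightly different road. The paper attacks an arbitrary right fraction $(w,f)$ with $w$ a weak equivalence directly: it factors $w=us$ by Brown's factorization lemma (with $s$ a section of a trivial fibration $t$, so $ts=1$), takes a section $x$ of $u$ via $(\ast)$, and observes that the chain of \emph{refinements} $(w,f)=(us,fts)\sim(u,ft)\sim(ux,ftx)=(1,ftx)$ consists of on-the-nose equalities, so no homotopies are needed at this stage. You instead first reduce to a trivial-fibration denominator by citing Brown's remark (legitimate, since that remark is recalled just above the lemma, though the paper's factorization makes this reduction explicit in one line), and then verify the equivalence $(t,f)\sim(1_X,f\sigma)$ by a symmetric common roof whose numerator legs only agree up to homotopy; this forces you to invoke Lemma~\ref{sect:triv} ($\sigma t\simeq 1$) and Lemma~\ref{first:cond}, which the paper reserves for Lemma~\ref{eq.rel}. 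What your route buys is a cleaner final representative ($f\sigma$ rather than $ftx$) and an explicit treatment of the second assertion (the paper leaves the unpacking of $(1,f)\sim(1,g)$ into $f\approx g$ implicit), at the cost of importing the homotopy machinery earlier than necessary. Two small points of hygiene: in $\pi\cC$ the equality $1\circ u_1=1\circ u_2$ only forces $u_1\approx u_2$, not $u_1=u_2$ in $\cC$, but your argument survives by taking $u:=u_1$; and the composite $uv$ is a weak equivalence simply because weak equivalences are closed under composition --- two-out-of-three is not needed.
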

\begin{pf}
Let $w$ be a weak equivalence, and $f$ a morphism.  Consider the right fraction
$$\begin{tikzcd}
&X'\arrow[dl,"w"']\arrow[drr,"f"]\\
X &&& Y
\end{tikzcd}$$
Use Brown's lemma \cite[Factorization lemma]{MR341469} to factor $w=us$, where $u$ is a trivial fibration, and $s$ a section of a trivial fibration $t$, so that $ts=1$.  Then use Property~($\ast$), to find a section $x$ of $u$, so that $ux=1$.  The  diagram 
$$\begin{tikzcd}
&X'\arrow[dl,"w=us"']\arrow[drr,"f"]\arrow[d,"s"]\\
X &\arrow[l,"u"'] X''\arrow[rr,"ft"]&& Y\\
&X\arrow[lu,"1"]\arrow[u,"x"]\arrow[urr,"ftx"']
\end{tikzcd}$$
shows that our given right fraction is equivalent to the morphism $ftx:X\to Y$. 
\end{pf}

\begin{lem}\label{eq.rel}
Under the assumption $\text{\rm($\ast$)}$, for any two morphisms $f,g:X\to Y$ in $\cC$, we have $f\approx g$ if and only if $f\simeq g$. 
\end{lem}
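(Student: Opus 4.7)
The plan is to show both implications, noting that one direction is essentially tautological while the other requires combining Brown's lemma with the hypothesis ($\ast$). First, I would observe that $f \simeq g$ trivially implies $f \approx g$ by taking the weak equivalence $t$ in the definition of $\approx$ to be the identity $\id_X$, since then $ft = f \simeq g = gt$.

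For the substantive direction, suppose $f \approx g$, so there exists a weak equivalence $t \colon X' \to X$ with $ft \simeq gt$. The strategy is to eliminate the weak equivalence $t$ in stages, converting the relation $ft \simeq gt$ into $f \simeq g$. Using Brown's lemma, factor $t = u \circ s$, where $u \colon X'' \to X$ is a trivial fibration and $s \colon X' \to X''$ is a section of a trivial fibration $t' \colon X'' \to X'$ (so that $t's = \id_{X'}$). By Property~($\ast$), the trivial fibration $u$ admits a section $x \colon X \to X''$ with $ux = \id_X$.

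Now I would chain together three facts. First, the hypothesis gives $fus \simeq gus$. Second, by Lemma~\ref{sect:triv} applied to the section $s$ of $t'$, we have $st' \simeq \id_{X''}$. Third, Lemma~\ref{first:cond} says homotopy is compatible with pre- and post-composition. Precomposing $fus \simeq gus$ with $t'$ yields $fust' \simeq gust'$, while postcomposing $st' \simeq \id$ with $fu$ and $gu$ respectively yields $fust' \simeq fu$ and $gust' \simeq gu$. Transitivity of $\simeq$ then gives $fu \simeq gu$. Finally, precomposing with $x$ and using $ux = \id_X$ yields $f \simeq g$, as desired.

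The only delicate point I anticipate is the proper bookkeeping of the compositions and the careful application of Lemma~\ref{first:cond}, which itself relies on Property~($\ast$); everything else is a direct assembly of the preceding lemmas, with Brown's lemma and the existence of sections for trivial fibrations doing the essential work.
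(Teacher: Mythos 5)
Your proof is correct and follows essentially the same route as the paper: Brown's factorization of the weak equivalence into a trivial fibration $u$ composed with a section $s$ of a trivial fibration, then Lemma~\ref{sect:triv} to get $st'\simeq\id$, Lemma~\ref{first:cond} for compatibility of homotopy with composition, and finally precomposition with a section of $u$ supplied by Property~($\ast$). The only differences are cosmetic (your relabelling of the trivial fibration as $t'$ and your explicit mention of the trivial direction, which the paper omits).
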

\begin{pf}
The main part consists of fleshing out \cite[Remark~2 on page~425]{MR341469}.  Let $w:X'\to X$ be a weak equivalence, such that $fw\simeq gw$. Again, we use Brown's lemma to factor $w=us$, where $u$ is a trivial fibration and $s$ is a section of a trivial fibration $t$. 
Thus, we have $fus\simeq gus$, and hence $fust\simeq gust$ by Lemma~\ref{first:cond}. 

Now by Lemma~\ref{sect:triv}, we have $1\simeq st$, which implies $fu\simeq fust\simeq gust\simeq gu$, by Lemma~\ref{first:cond}, and transitivity of the homotopy relation.  Precomposing with a section of $u$,   we obtain $f\simeq g$, as required. 
\end{pf}

Now Lemmas~\ref{fracc} and~\ref{eq.rel} imply immediately the following 

\begin{prop}
Under the assumption {\rm($\ast$)}, the homotopy category $\Ho(\cC)$ of $\cC$ has the same objects as $\cC$, and has homotopy classes of morphisms in $\cC$ as morphisms:
$$\Hom_{\Ho(\cC)}(X,Y)=\Hom_\cC(X,Y)/\simeq\,.$$
\end{prop}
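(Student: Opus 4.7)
The plan is to simply assemble the two preceding lemmas. By Brown's description, morphisms in $\Ho(\cC)$ are right fractions modulo the usual equivalence, and Lemma~\ref{fracc} already tells us that, under assumption~($\ast$), every such right fraction is equivalent to an actual morphism $f:X\to Y$ of $\cC$, with two such morphisms $f,g$ representing the same morphism in $\Ho(\cC)$ if and only if $f\approx g$. This identifies the hom-set $\Hom_{\Ho(\cC)}(X,Y)$ with $\Hom_\cC(X,Y)/\!\approx$.

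Next, I would invoke Lemma~\ref{eq.rel}, which asserts that under~($\ast$) the relation $\approx$ coincides with the homotopy relation $\simeq$ on $\Hom_\cC(X,Y)$. Substituting this into the previous identification yields
\[
\Hom_{\Ho(\cC)}(X,Y) \;=\; \Hom_\cC(X,Y)/\!\simeq,
\]
as required.

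It remains to check that composition descends: if $f\simeq f'$ and $g\simeq g'$ (with the relevant compositions defined), then $gf\simeq g'f'$. This follows from Lemma~\ref{first:cond}, which gives $gf\simeq gf'$ by post-composition and $gf'\simeq g'f'$ by pre-composition, so transitivity closes the loop. Since the objects of $\pi\cC$ (and hence of the category of right fractions) are the same as those of $\cC$, the statement about objects is immediate. There is no real obstacle here; all the content has been packaged into Lemmas~\ref{sect:triv}, \ref{first:cond}, \ref{fracc}, and~\ref{eq.rel}, and the proposition is a formal consequence.
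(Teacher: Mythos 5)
Your proposal is correct and matches the paper's argument exactly: the proposition is stated there as an immediate consequence of Lemmas~\ref{fracc} and~\ref{eq.rel}, which is precisely the assembly you carry out. Your extra check that composition descends via Lemma~\ref{first:cond} is a reasonable bit of added care but not a departure from the paper's route.
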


\begin{cor}
Under the assumption {\rm($\ast$)}, every weak equivalence is a homotopy equivalence. 
\end{cor}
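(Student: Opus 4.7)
The plan is to produce an explicit homotopy inverse for a given weak equivalence $w:X\to Y$, using the tools already developed: Brown's factorization lemma, property ($\ast$), Lemma~\ref{sect:triv}, and Lemma~\ref{first:cond}. (A slicker but less informative route would be to invoke the preceding proposition: $[w]$ is invertible in $\Ho(\cC)$, every morphism of $\Ho(\cC)$ is represented by an actual morphism of $\cC$, and equality in $\Ho(\cC)$ is homotopy in $\cC$; I prefer the direct construction.)

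First I would factor the weak equivalence using Brown's lemma exactly as in the proof of Lemma~\ref{fracc}: write $w=us$, where $u:Z\to Y$ is a trivial fibration and $s:X\to Z$ is a section of a trivial fibration $t:Z\to X$, so $ts=1_X$. Then, using assumption ($\ast$), pick a section $x:Y\to Z$ of $u$, so that $ux=1_Y$. I propose the candidate
\[
v\;=\;tx\;:\;Y\longrightarrow X
\]
as the homotopy inverse of $w$.

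The verification splits into two symmetric applications of Lemma~\ref{sect:triv}. For $wv$, rewrite
\[
wv\;=\;us\cdot tx\;=\;u\,(st)\,x.
\]
By Lemma~\ref{sect:triv} applied to the section $s$ of the trivial fibration $t$, we have $st\simeq 1_Z$; then Lemma~\ref{first:cond} gives $u(st)x\simeq u\cdot 1_Z\cdot x=ux=1_Y$. For $vw$, rewrite
\[
vw\;=\;tx\cdot us\;=\;t\,(xu)\,s.
\]
By Lemma~\ref{sect:triv} applied to the section $x$ of the trivial fibration $u$, we have $xu\simeq 1_Z$, and Lemma~\ref{first:cond} again yields $t(xu)s\simeq t\cdot 1_Z\cdot s=ts=1_X$. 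Hence $wv\simeq 1_Y$ and $vw\simeq 1_X$, so $w$ is a homotopy equivalence.

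There is essentially no obstacle: the only thing to notice is that Brown's factorization produces sections of trivial fibrations on \emph{both} sides of the picture (the section $s$ by construction, and the section $x$ of $u$ supplied by ($\ast$)), so that Lemma~\ref{sect:triv} can be invoked symmetrically. The compatibility of $\simeq$ with composition provided by Lemma~\ref{first:cond}, which itself relied on ($\ast$), is what makes substituting $st$ and $xu$ by identities inside the composite legitimate.
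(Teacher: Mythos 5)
Your argument is correct and coincides with the paper's own proof: the same Brown factorization $w=us$, the same candidate inverse $t\circ(\text{section of }u)$, and the same two applications of Lemma~\ref{sect:triv} together with compatibility of $\simeq$ with composition. No issues.
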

\begin{pf}
Factor the weak equivalence $w=us$, where $u$ is a trivial fibration and $s$ is a section of the trivial fibration $t$. Let $v$ be a section of $u$. Then we have $ts=1$, $st\simeq 1$, and $uv=1$, $vu\simeq1$. Let $y=tv$.  Then $y$ is a homotopy inverse of $w$.  In fact, $wy=ustv\simeq uv=1$, and $yw=tvus\simeq ts=1$. 
\end{pf}

\begin{rmk}
Under the assumption {\rm($\ast$)}, two morphisms $f,g:X\to Y$ are homotopic, if and only if for {\em every }path space object $PY$ for $Y$ there exists a homotopy $h:X\to PY$, such that $\ev_0\circ h=f$, and $\ev_1\circ h=g$. 
\end{rmk}

\subsection{The homotopy category of $L_\infty$-bundles}

Now let $\catL$ be the category of $L_\infty$-bundles. 
To investigate $\Ho(\catL)$, we introduce the {\bf category of germs of $L_\infty$-bundles} $\germL$, whose objects are  $L_\infty$-bundles, and whose morphisms are given by
$$
\Mor_{\germL}(\M,\N) = \injectlim_{\pi_0(\M)\subset U}\Mor_{\catL}(\M|_U,\N)\,,
$$
where $\M$ and $\N$ are $L_\infty$-bundles, and the colimit is taken over all open neighborhoods of the classical locus of $\M$ inside the underlying manifold of $\M$. In other words, a morphism $[f_U]:\M \to \N$ in $\germL$ is an equivalence class represented by a morphism of $L_\infty$-bundles $f_U:\M|_U \to \N$, where $U$ is an open neighborhood of $\pi_0(\M)$ in the base manifold of $\M$. Two equivalence classes $[f_U], [f_V]: \M \to \N$ are equal if and only if there exists another open neighborhood $W$ of $\pi_0(\M)$ such that $W \subset U \cap V$ and $f_U|_W = f_V|_W$ in $\catL$. We say that $[f_U]$ is the {\bf germ} of $f_U$.

Composition of germs, and identity germs, are defined in a straightforward manner, leading to the category $\germL$, which comes together with a canonical functor $\catL\to\germL$, sending  a morphism to its germ. (In fact,  $\germL$ is the localization of $\L$ at the weak equivalences of the form $\M|_U\to\M$.)

A morphism in $\germL$ is called a   \textbf{weak equivalence}, if it is a germ of a weak equivalence. This definition makes $\germL$ into a category with weak equivalences.  Since every morphism $\M|_U\to \M$ is a weak equivalence in $\catL$, the homotopy category of $\germL$ is equal (or rather canonically isomorphic, not just equivalent) to the homotopy category of $\catL$:
$$\Ho(\catL)=\Ho(\germL)\,.$$

We call a morphism in $\germL$  a \textbf{fibration},  if it is a germ of a fibration. (Not every representative of a fibration in $\germL$ needs to be a fibration in $\catL$, in contrast to the situation with weak equivalences.)

\begin{prop} 
The category $\germL$ is a category of fibrant objects, which satisfies Property~{\rm ($\ast$)}. 
\end{prop}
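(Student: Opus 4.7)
The plan is to verify Brown's axioms for a category of fibrant objects on $\germL$ by transporting them from $\catL$ through the localization functor, and then to deduce Property~($\ast$) from Corollary~\ref{cor:LocalSec}. The main unifying observation is that any finite collection of germs in $\germL$ admits representatives defined on a common restriction $\M|_U$, where $U$ is an open neighborhood of $\pi_0(\M)$; so any categorical construction (product, pullback, path object, composition) can first be performed in $\catL$ on such a common restriction, and then its germ taken in $\germL$.

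First I would check the structural axioms: the terminal object of $\catL$ is still terminal in $\germL$; finite products and pullbacks along fibrations exist because they can be built in $\catL$ from representatives on a common restriction, and the relevant universal properties pass to germs because any germ out of the pullback admits a representative on a common restriction. Path space objects exist because any path space object $\P\N$ in $\catL$ remains one in $\germL$ (the relevant maps being germs of the corresponding $\catL$-morphisms). Closure of fibrations and weak equivalences under composition, and the 2-out-of-3 property for weak equivalences, follow directly from the corresponding statements in $\catL$ once representatives are chosen on a common neighborhood of the classical locus. Isomorphisms are trivial fibrations because identities are, and the restriction maps $\M|_U\to\M$ (for $U\supset\pi_0(\M)$) become isomorphisms in $\germL$ by construction.

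For Property~($\ast$), let $[f_U]:\M\to\N$ be a trivial fibration in $\germL$, represented by an honest trivial fibration $f_U:\M|_U\to\N$ in $\catL$, where $U$ is an open neighborhood of $\pi_0(\M)$. Apply Corollary~\ref{cor:LocalSec} to $f_U$: there is an open neighborhood $V\supset\pi_0(\N)$ in $N$ and a section $s:\N|_V\to\M|_U$ of $f_U$ (that is, $f_U\comp s$ equals the inclusion $\N|_V\hookrightarrow\N$). Since $V$ contains $\pi_0(\N)$, the inclusion $\N|_V\hookrightarrow\N$ is an isomorphism in $\germL$, so $s$ determines a germ $[s]:\N\to\M$ which is a section of $[f_U]$ in $\germL$.

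The main obstacle is the bookkeeping needed to show that the classes of fibrations and weak equivalences in $\germL$—defined as germs of fibrations and weak equivalences in $\catL$—are genuinely closed under the relevant operations, rather than just non-empty among representatives. This reduces to the routine observation that fibrations, weak equivalences and trivial fibrations in $\catL$ are all stable under further restriction to open neighborhoods of the classical locus, so that representatives can always be refined to a common $\M|_W$ where all desired properties hold simultaneously.
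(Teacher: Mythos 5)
Your proposal is correct and follows essentially the same route as the paper: the paper likewise observes that fibered products in $\catL$ map to fibered products in $\germL$, that the remaining axioms are routine to check on common restrictions, and that Property~($\ast$) follows directly from Corollary~\ref{cor:LocalSec}. Your treatment simply spells out the details the paper leaves implicit, including the (correct) point that a trivial fibration in $\germL$ admits a representative that is an honest trivial fibration in $\catL$.
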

\begin{pf}
To prove that fibrations and trivial fibrations are stable under pullbacks, it is helpful to note that fibered products in $\catL$ map to fibered products in $\germL$.  In fact, none of the axioms of a category of fibrant objects is difficult to check.  Property~{\rm ($\ast$)} follows directly from Corollary~\ref{cor:LocalSec}.
\end{pf}

The functor $\catL\to \germL$ maps path space objects to path space objects. Conversely, we have 
\begin{lem}\label{estra:lem}
Every path space object for $\N$ in $\germL$ is isomorphic to the image under $\catL\to\germL$ of a path space object in $\catL$ for $\N|_U$, for some open neighborhood $U$ of $\pi_0(\N)$. 
\end{lem}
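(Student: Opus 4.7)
The strategy is to lift the germ data defining the path space structure on $P\N$ to honest morphisms in $\catL$ and then shrink all base manifolds until the path space axioms hold strictly, not just up to germs. Say we are given a path space object $P\N$ for $\N$ in $\germL$, with germ of weak equivalence $[s]:\N\to P\N$ and germ of fibration $[e]:P\N\to \N\times\N$ satisfying $[e]\circ[s]=[\Delta_\N]$.

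First I would choose honest representatives in $\catL$: a weak equivalence $s':\N|_{V_1}\to P\N$ representing $[s]$, with $V_1$ an open neighborhood of $\pi_0(\N)$, and a fibration $e':P\N|_{V_2}\to \N\times\N$ representing $[e]$, with $V_2$ an open neighborhood of $\pi_0(P\N)$. Shrinking $V_1$ to $V_1\cap (s')^{-1}(V_2)$, the composition $e'\circ s'$ is a morphism in $\catL$ whose germ equals $[\Delta_\N]$; after shrinking further to a suitable open neighborhood $U$ of $\pi_0(\N)$, I may assume the strict identity $e'\circ s'|_U=\Delta_\N|_U$ holds in $\catL$.

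Next I would set $V=V_2\cap (e')^{-1}(U\times U)$, an open subset of the base of $P\N$. Two checks are needed: that $V\supset \pi_0(P\N)$ --- here I use that $[s]$ induces a bijection on classical loci, so that together with $[e]\circ[s]=[\Delta_\N]$ one gets $e'(\pi_0(P\N))\subset\{(p,p):p\in\pi_0(\N)\}\subset U\times U$ --- and that $s'(U)\subset V$, which is immediate from the construction. I then claim that
$$
\N|_U\xxto{s'|_U} P\N|_V \xxto{e'|_V} \N|_U\times\N|_U
$$
is a path space object for $\N|_U$ in $\catL$: the second morphism is a fibration since restricting a fibration to opens upstairs and downstairs preserves submersion of the base map and fiberwise surjectivity of the linear part; the first is a weak equivalence by two-out-of-three applied to the weak equivalence $P\N|_V\hookrightarrow P\N$ and the restriction $s'|_U$ (itself a weak equivalence as restriction of $s'$ to an open neighborhood of $\pi_0(\N)$); and the composition equals $\Delta_{\N|_U}$ by construction.

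Finally, to obtain the isomorphism in $\germL$, I would use that the inclusions $\N|_U\hookrightarrow\N$ and $P\N|_V\hookrightarrow P\N$ are weak equivalences and hence become isomorphisms after the localization defining $\germL$. By construction, under these identifications $s'|_U$ represents $[s]$ and $e'|_V$ represents $[e]$, so the image of $(P\N|_V,s'|_U,e'|_V)$ in $\germL$ is isomorphic to $(P\N,[s],[e])$ as a path space object for $\N$. The only real nuisance is the open-neighborhood bookkeeping required to keep all compositions legitimate and to ensure the diagram identifying the two path space objects commutes on the nose rather than merely up to further shrinking.
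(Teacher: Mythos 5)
Your proposal is correct and follows essentially the same route as the paper: choose honest representatives of the germs, shrink the domain of $\N$ to a common neighborhood $U$ on which the composite is strictly the diagonal, and restrict $P\N$ to the preimage of $U\times U$ (a neighborhood of $\pi_0(P\N)$ by the bijection on classical loci). Your extra bookkeeping — checking $V\supset\pi_0(P\N)$, $s'(U)\subset V$, and the identification in $\germL$ — only makes explicit steps the paper leaves implicit.
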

\begin{pf}
Suppose that we have a path space object for $\N$ in the category $\germL$, 
\begin{equation}\label{psobjjkl}
\begin{tikzcd}
& \P_{\text{germ}}\N\arrow[dr,two heads,dotted]\\
\N\arrow[rr,"\Delta",dotted]\arrow[ur,"\sim",dotted] &  & \N\times \N\rlap{\,,}
\end{tikzcd}\end{equation}
where the dotted arrows indicate germs of morphisms.  By replacing $\P_{\text{germ}}\N$ by a neighborhood of its classical locus, we may assume that $\P_{\text{germ}}\N\to\N\times\N$ is a morphism in $\catL$. Then we can find a neighborhood $U$ of $\pi_0(\N)$, that is a common domain of definition for both dotted arrows originating at $\N$.  We obtain a commutative diagram in $\catL$
$$\begin{tikzcd}
& \P_{\text{germ}}\N\arrow[dr,two heads]\\
\N|_U\arrow[r,"i"]\arrow[ur,"\sim"] & \N \arrow[r,"\Delta"]& \N\times \N\rlap{\,,}
\end{tikzcd}$$
where $i$ is the inclusion of   $\N|_U$  into $\N$, the morphism $\N|_U\to\P_{\text{germ}}\N$ is a weak equivalence, and $\P_\text{germ}\N\to \N\times \N$ is a fibration. Now $U\times U$ is an open neighborhood of the classical locus of $\N\times \N$, and restricting to the preimage of $U\times U$ in $\P_{\text{germ}}\N$, we may assume that we have the diagram 
$$\begin{tikzcd}
& \P_{\text{germ}}\N\arrow[dr,two heads]\\
\N|_U\arrow[rr,"\Delta"]\arrow[ur,"\sim"] & & \N|_U\times \N|_U\rlap{\,}
\end{tikzcd}$$
in $\catL$. It represents a path space object for $\N|_U$ in $\catL$, whose image in $\germL$ is isomorphic to~(\ref{psobjjkl}).
\end{pf}

\begin{cor}\label{cor:htpCat}
For $L_\infty$-bundles $\M$ and $\N$, we have
$$
\Mor_{\Ho(\catL)}(\M,\N) = \Mor_{\germL}(\M,\N)\big/ \simeq .
$$
More explicitly, every morphism $\M\to \N$ in $\Ho(\L$) can be represented by a morphism of $L_\infty$-bundles $\M|_U\to \N$, with $U$ being an open neighborhood of $\pi_0(\M)$ in the underlying base manifold, and two such are equal in $\Ho(\L)$, if and only if, when restricted to  suitable neighborhoods of $\pi_0(\M)$ and $\pi_0(\N)$, they become homotopic. 
\end{cor}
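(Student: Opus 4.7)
The plan is to assemble the corollary from three ingredients already established in the excerpt: (i) the canonical identification $\Ho(\catL)=\Ho(\germL)$; (ii) the Proposition stating that in any category of fibrant objects satisfying ($\ast$) the homotopy category has the same objects and has morphisms given by $\Hom_\cC(X,Y)/\!\simeq$; and (iii) the fact that $\germL$ is such a category (Proposition preceding Lemma~\ref{estra:lem}). Applying (ii) to $\germL$ and combining with (i) yields the first displayed equality
\[
\Mor_{\Ho(\catL)}(\M,\N)=\Mor_{\Ho(\germL)}(\M,\N)=\Mor_{\germL}(\M,\N)\big/\!\simeq,
\]
which is the abstract form of the corollary.

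Next, I would unfold this into the explicit description. By definition of $\germL$, every element of $\Mor_{\germL}(\M,\N)$ is represented by an honest morphism of $L_\infty$-bundles $\M|_U\to\N$ for some open neighborhood $U$ of $\pi_0(\M)$; this handles the representability claim. For the equality criterion, I would show that two morphisms $f_U:\M|_U\to\N$ and $g_V:\M|_V\to\N$ define the same element of $\Mor_{\Ho(\catL)}(\M,\N)$ if and only if, after restricting to some common open neighborhood $W\subset U\cap V$ of $\pi_0(\M)$ and some open neighborhood $U'$ of $\pi_0(\N)$ in $\N$ through which they both factor, the restricted morphisms $f_U|_W,\,g_V|_W:\M|_W\to\N|_{U'}$ are homotopic in $\catL$.

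The forward direction reduces, via the identification above, to transporting a homotopy in $\germL$ back to $\catL$. For this, I would invoke Lemma~\ref{estra:lem}: any path space object $\P_{\text{germ}}\N$ in $\germL$ witnessing $f\simeq g$ is isomorphic in $\germL$ to the image of an honest path space object $P\bigl(\N|_{U'}\bigr)$ in $\catL$ for a suitable neighborhood $U'$ of $\pi_0(\N)$. A homotopy germ $h:\M\to\P_{\text{germ}}\N$ then has a representative morphism $\M|_W\to P(\N|_{U'})$ in $\catL$ (shrinking $W$ if necessary to a common domain of definition contained in $U\cap V\cap f^{-1}(U')\cap g^{-1}(U')$), and its compositions with the two evaluation maps recover $f_U|_W$ and $g_V|_W$ up to further restriction. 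The reverse direction is immediate: a homotopy in $\catL$ pushes forward to a homotopy in $\germL$ under the canonical functor, and composing on the right with the weak equivalence $\M|_W\to\M$ yields the required equality in $\Ho(\catL)$.

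The main obstacle is the bookkeeping of open neighborhoods in the forward direction—ensuring that the path space object, the homotopy, and the two morphisms $f_U,g_V$ can simultaneously be represented on a single common neighborhood $W\subset M$ mapping into a single common neighborhood $U'\subset N$. This is precisely where Lemma~\ref{estra:lem} does the heavy lifting, by producing a path space object in $\catL$ for $\N|_{U'}$; the remaining shrinking arguments are routine once that lemma is in hand.
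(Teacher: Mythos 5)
Your proposal is correct and follows the same route as the paper: the displayed equality comes from combining $\Ho(\catL)=\Ho(\germL)$ with the general Proposition for categories of fibrant objects satisfying ($\ast$) applied to $\germL$, and the explicit homotopy criterion is obtained from Lemma~\ref{estra:lem}, exactly as the paper's (one-line) proof indicates. Your version simply spells out the neighborhood bookkeeping that the paper leaves implicit.
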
 
\begin{pf}
For the last claim, use Lemma~\ref{estra:lem}.
\end{pf}

\begin{cor}
In the category $\germL$, every weak equivalence is a homotopy equivalence.  
\end{cor}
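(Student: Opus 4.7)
The plan is to observe that this corollary is a direct instance of the earlier general corollary \textquotedblleft Under the assumption ($\ast$), every weak equivalence is a homotopy equivalence,\textquotedblright\ applied to the category $\germL$. Indeed, the preceding proposition establishes that $\germL$ is a category of fibrant objects satisfying Property~($\ast$), so no new argument is logically required.

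For the reader's benefit I would briefly recall the argument in this specific setting. Let $w:\M\to \N$ be a weak equivalence in $\germL$. By Brown's lemma (applied inside $\germL$), factor $w=us$ where $u$ is a trivial fibration and $s$ is a section of some trivial fibration $t$, so that $ts=1$, and by Lemma~\ref{sect:triv} applied in $\germL$ we also have $st\simeq 1$. Now invoke Property~($\ast$), which in $\germL$ follows from Corollary~\ref{cor:LocalSec}, to obtain a section $v$ of $u$ with $uv=1$, and then $vu\simeq 1$ by Lemma~\ref{sect:triv} once more. Setting $y=tv$, a short computation gives $wy = ustv \simeq uv = 1$ and $yw = tvus \simeq ts = 1$, using Lemma~\ref{first:cond} to post- and pre-compose homotopies with morphisms.

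There is no real obstacle here; the only point worth emphasizing is that the appeals to Lemmas~\ref{sect:triv} and~\ref{first:cond} are legitimate in $\germL$ precisely because Property~($\ast$) holds there, which in turn uses the inverse function theorem (Theorem~\ref{ift}) to produce local sections of trivial fibrations. Thus the statement is essentially a repackaging of the general categorical fact in the concrete setting of germs of $L_\infty$-bundles, and the one-line proof \textquotedblleft apply the preceding corollary\textquotedblright\ is genuinely sufficient.
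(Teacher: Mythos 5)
Your proposal is correct and matches the paper exactly: the paper states this corollary without proof precisely because it is the immediate specialization of the general corollary ``under ($\ast$), every weak equivalence is a homotopy equivalence'' to $\germL$, which the preceding proposition shows is a category of fibrant objects satisfying Property~($\ast$). Your recalled argument ($w=us$, $y=tv$, $wy\simeq uv=1$, $yw\simeq ts=1$) is verbatim the paper's proof of that general corollary, so nothing is missing.
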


\begin{rmk}
The question remains, if for any weak equivalence $f:\M\to\N$ in $\catL$, we can find open neighborhoods $\pi_0(\M)\subset U$ and $\pi_0(\N)\subset V$, such that $f$ induces a morphism $f:\M|_U\to\N|_V$, which admits a homotopy inverse in $\catL$. (A similar statement has been conjectured by Amorim-Tu~\cite{MR4542391}, although it is not clear to us whether their notion of homotopy is equivalent to ours.)
\end{rmk}

\section{Quasi-isomorphisms}

As another application of Theorem~\ref{recap}, in this section, we investigate the relationship between weak equivalences and quasi-isomorphisms.

Recall that one has a fully faithful functor \cite{BLX1}
\begin{align*}
(\text{$L_\infty$-bundles})&\longrightarrow (\text{dg manifolds of
positive amplitudes})\\
(M,L,\lambda)&\longmapsto (M,\Sym L^\vee,Q_\lambda)\,.\nonumber
\end{align*}
In particular, a morphism of $L_\infty$-bundles $(M,L,\lambda) \to (N,L',\lambda')$ induces a morphism of dg algebras $\Gamma(N,\Sym (L')^\vee)\to\Gamma(M,\Sym L^\vee)$ of the global sections in the opposite direction. 

\begin{defn}
A morphism of $L_\infty$-bundles is said to be a \textbf{quasi-isomorphism} if it induces a quasi-isomorphism of the dg algebras of global sections.
\end{defn}

In this section, we give an elementary proof of the following

\begin{thm}\label{thm:WeakEqImplyQuasiIso}
Suppose that $(M,L,\lambda)\to (N,L',\lambda')$ is a weak equivalence of $L_\infty$-bundles.  Then the induced morphism of differential graded algebras $\Gamma(N,\Sym (L')^\vee)\to\Gamma(M,\Sym L^\vee)$ is a quasi-isomorphism.
\end{thm}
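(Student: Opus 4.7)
The plan is to combine Brown's factorization lemma with the structure theorem for trivial fibrations (Theorem~\ref{recap}) to reduce the statement to three elementary cases that can be handled directly. First, by Brown's lemma in the category of fibrant objects $\catL$, any weak equivalence $f:\M\to\N$ factors as $f=p\circ s$, where $p$ is a trivial fibration and $s$ is a section of a trivial fibration $q$. Dualizing $qs=\mathrm{id}$ gives $s^*q^*=\mathrm{id}$ on global section dg algebras, so two-out-of-three applied to the identity $\mathrm{id}=s^*q^*$ reduces the theorem to showing that every trivial fibration induces a quasi-isomorphism on global section dg algebras. For a trivial fibration $(f,\phi):\M\to\N$, Theorem~\ref{recap}(2) produces a chain of transfer embeddings followed by a simple subbundle inclusion $(Y,E,\nu)\hookrightarrow(U,H,\mu)$ whose composition with $f$ factors as a linear isomorphism $(Y,E,\nu)\cong\N|_{f(Y)}$ followed by the open inclusion $\N|_{f(Y)}\hookrightarrow\N$. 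Hence it suffices to treat three elementary morphism types: (a)~open inclusions $\N|_V\hookrightarrow\N$ with $V\supset\pi_0(\N)$; (b)~the inclusion from Proposition~\ref{transfertheorem}; (c)~the simple subbundle inclusion from Lemma~\ref{lastcase}.

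Case~(b) is essentially formal: Proposition~\ref{transfertheorem} furnishes a strict retraction with $\tilde\pi\bullet\iota=\mathrm{id}_H$, giving $\iota^*\tilde\pi^*=\mathrm{id}$ on dg algebras, and the contraction $\eta$ provides an explicit chain homotopy between $\tilde\pi^*\iota^*$ and the identity via the standard homotopy transfer formulas. For case~(a), the plan is to show that the cohomology sheaves $\mathcal{H}^\bullet(\Sym L^\vee,Q_\lambda)$ are supported on $\pi_0(\M)$. At a point $P$ with $\lambda_0(P)\ne 0$, choose a local frame of $L^1$ whose first basis vector $e_1$ is close to $\lambda_0(P)$, and rescale the dual coordinate to obtain $\xi\in L^{1,\vee}$ with $\langle\lambda_0,\xi\rangle=1$ on a neighborhood of $P$. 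The positivity of the grading of $L$ forces $\Sym^k L^\vee$ to contain no elements of cohomological degree $\geq 0$ for $k\geq 1$, so all higher terms $\lambda_k^*(\xi)$ with $k\geq 1$ vanish for dimensional reasons and $Q_\lambda(\xi)=1$ exactly. The same positivity ensures that $Q_\lambda$ annihilates all of $C^\infty_M\subset\Sym L^\vee$, so the derivation identity $\{Q_\lambda,m_\xi\}=m_{Q_\lambda(\xi)}=\mathrm{id}$ yields a contracting homotopy on the stalk. Since $\Sym L^\vee$ is a fine sheaf of $C^\infty_M$-modules, global sections compute hypercohomology, which localizes to any open neighborhood of the support.

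The main obstacle is case~(c). My plan is to shrink $U$ so that $f(U)=f(Y)$ (possible because $f(Y)$ is open in $N$) and define the retraction $p:=(f|_Y)^{-1}\circ f|_U:U\to Y$; then $f=f|_Y\circ p$ on $U$, so $f^*t$ is constant along the $p$-fibers and $u\in\Gamma(U,K)$ corresponds to a tautological section. With compatible splittings, $\Sym H^\vee|_U\cong p^\ast\Sym E^\vee|_Y\otimes\Lambda^\bullet K^\vee|_U$, and the simple subbundle inclusion corresponds on dg algebras to the surjection $\Sym H^\vee|_U\twoheadrightarrow\Sym E^\vee|_Y$ obtained by restricting to $Y$ and quotienting by the $K^\vee$-generators. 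Its kernel $I$ is a $Q_\mu$-stable ideal (since $Q_\mu(y_j)=0$ and $Q_\mu(\xi^j)=y_j$), so by the long exact sequence in cohomology it suffices to show that $I$ has vanishing cohomology on global sections. Because $u$ is a regular section of $K$, the classical Koszul resolution of $\mathcal{O}_Y$ by $(\Lambda^\bullet K^\vee,\iota_u)$ implies that $I$ is acyclic at every stalk for the Koszul-only part of $Q_\mu$; the $\Lambda K^\vee$-wedge-degree filtration is bounded since $K$ has finite rank, so a spectral sequence argument propagates the acyclicity to the full differential $Q_\mu$. Combining this stalkwise vanishing with case~(a) applied to $U$ and $Y$ separately---which lets us pass from stalkwise to global-sections statements---completes the proof.
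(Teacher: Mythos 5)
Your overall route coincides with the paper's: factor the weak equivalence by Brown's lemma to reduce to (linear) trivial fibrations, apply Theorem~\ref{recap}(2), and then treat the three elementary constituents separately. Your case~(a) (showing the cohomology sheaves are killed by the contracting homotopy $m_\xi$ off the classical locus, then localizing via softness and hypercohomology) is a correct variant of the paper's bump-function argument, and the degree bookkeeping behind $Q_\lambda(\xi)=1$ is right. Your case~(c) is essentially the paper's Koszul argument (Proposition~\ref{koszul} and Lemma~\ref{former}) reorganized around the ideal $I$ rather than flat base change; both work.

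The gap is case~(b). That a transfer embedding induces a quasi-isomorphism of global section algebras is \emph{not} ``essentially formal'', and the sentence ``the contraction $\eta$ provides an explicit chain homotopy between $\tilde\pi^*\iota^*$ and the identity via the standard homotopy transfer formulas'' conceals the main technical content of the entire proof (the paper's Proposition~\ref{prop:TransEmbedQiso}). The strict identity $\tilde\pi\bullet\phi=\id_H$ does give $\phi^\sharp\circ\tilde\pi^\sharp=\id$ for free, but for the other composite you need a degree $-1$ operator $h$ on $\Sym L^\vee$ with $[Q,h]=\id-\phi^\sharp\tilde\pi^\sharp$, and $\eta$ does not supply one directly. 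First, extending $\eta^\vee$ as a derivation of $\Sym L^\vee$ and bracketing with the derivation extension of $\delta^\vee$ yields the derivation extension of $(\id-\iota\pi)^\vee$, which acts on a monomial by multiplication by the number of factors lying in the acyclic summand --- not by $\id$ minus the projection onto $\Sym H^\vee$. This is exactly why the paper must divide by the symmetric weight and route the homotopy through the coproduct, $h=m\circ w\circ(\phi_1^\sharp\tilde\pi_1^\sharp\otimes\tilde\eta')\circ\Delta$. Second, even this corrected $h$ satisfies the homotopy identity only for the linear part $\delta+q_1$ of the differential; the curvature term $q_0=\iota_{\lambda_0}$ and the higher operations are not controlled by $\eta$ at all, and the paper needs the bigrading $A^{k,\ell}=\Gamma(M,\Sym^{-\ell}L^\vee)^{k+\ell}$, the associated bounded filtration, and a convergent spectral sequence to promote the $E_1$-level homotopy equivalence to a quasi-isomorphism of $(A,\delta+q)$. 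As written, your case~(b) is an assertion rather than an argument: the ``standard'' transfer formulas you invoke live on the complex $L$, not on the Chevalley--Eilenberg algebra $\Sym L^\vee$, and the passage between the two is precisely the missing step.
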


\subsection{Proof of Theorem~\ref{thm:WeakEqImplyQuasiIso}: transversal case}

Our proof of Theorem~\ref{thm:WeakEqImplyQuasiIso} is divided into two parts. 
We first prove it in the transversal case, and then reduce the general situation to the transversal case. 

Let $(M,E,u)$ be a quasi-smooth $L_\infty$-bundle, where $M$ is a manifold and $u\in \Gamma(E)$ a regular section. In other words, the section $u$ is transversal to the zero section. 
Let $Y\subset M$ be the zero locus of $u$, which is a submanifold of $M$. We have a canonical epimorphism of vector bundles $Du|_Y:TM|_Y\to E|_Y$, whose kernel is $TY$.  

We will place $E$ in degree $1$, so that $(M,E,u)$, as well as $(Y,0,0)$, are $L_\infty$-bundles, and $(Y,0,0)\to (M,E,u)$ is a morphism of $L_\infty$-bundles (in fact, a weak equivalence of $L_\infty$-bundles).  The following proposition contains the well-known Koszul resolution.  We sketch a proof here for completeness.  

\begin{prop}\label{koszul}
The restriction map $(\Gamma(M,\Sym E^\vee),\iota_u )\to (C^\infty(Y),0)$ is a quasi-isomorphism.
\end{prop}
\begin{pf}
Let $ U\supset Y$ be an open neighborhood of $Y$ in $M$. 

\paragraph{Claim.} The restriction map
$$(\Gamma(M,\Sym E^\vee),\iota_u)\longrightarrow (\Gamma(U,\Sym E^\vee),\iota_u)$$ is a quasi-isomorphism.
To prove this, let $f:M\to\rr$ be a differentiable function such that $f|_Y=1$, and $f|_{M\setminus U}=0$. 
Multiplication by $f$ defines a homomorphism of differential graded  
$(\Gamma(M,\Sym E^\vee),\iota_u)$-modules 
$$(\Gamma(U,\Sym E^\vee),\iota_u)\longrightarrow (\Gamma(M,\Sym E^\vee),\iota_u)\,,$$
in the other direction, which is a section of the restriction map.  
Now it suffices to show that for every $i\leq0$, multiplication by $f$ induces the identity on $H^i(\Gamma(M,\Sym E^\vee),\iota_u)$.  This follows from the fact that $f$ restricts to the identity in $C^\infty(Y)$, and that every cohomology group $H^i(\Gamma(M,\Sym E^\vee), \iota_u)$ is a $C^\infty(Y)$-module. 

By the claim, we can replace $M$ by any open neighborhood of $Y$. 
In fact, we will assume that $M$ is a tubular neighborhood of $Y$, with Euler vector field $v$ and projection $\rho:M\to Y$. We denote the relative tangent bundle by $T_{M/Y}$. The Euler vector field is a regular section of $T_{M/Y}$, with zero locus $Y$. 

\paragraph{Claim.} After restricting to a smaller neighborhood of $Y$ if necessary, there exists an isomorphism of vector bundles $\Psi:T_{M/Y}\to E$, such that $\Psi(v)=u$. 

To prove this claim, first assume that there exists a normal coordinate system $(x^i)_{i=0,\ldots,n}$ on $M$, compatible with the tubular neighborhood structure.  This means that the Euler vector field has the form 
$v=\sum_{i=1}^k x^i\del_i$, where $k$ is the rank of $E$, i.e.\ the codimension of $Y$ in $M$. Also assume that $E$ is trivial, with frame $(e_i)_{i=1,\ldots,k}$. Assume that $u=\sum u^ie_i$. Define a bundle map $\Psi:T_{M/Y}\to E$ over $Y$ which, in coordinates, are given by the matrix 
$$\Psi_i^j=\int_0^1(\del_iu^j)(tx^1,\ldots,tx^k,x^{k+1},\ldots,x^n)\,dt\,.$$
Then $\Psi(v)=u$, and $\Psi|_Y$ is the canonical isomorphism $Du|_Y$. 

For the general case, construct a family of $\Psi_\alpha$ locally, and define a global bundle map $\Psi:T_{M/Y}\to E$ using a partition of unity $(\psi_\alpha)$:
$$\Psi=\sum_{\alpha}\psi_\alpha \Psi_\alpha\,.$$
We have that $\Psi(v)=u$, and $\Psi|_Y=D u|_Y$.  Hence $\Psi$ is an isomorphism in an open neighborhood of $Y$.  Upon replacing $M$ by this neighborhood, we may assume that $\Psi$ is an isomorphism globally.

We are now reduced to the case where $E=T_{M/Y}[-1]$, and $u$ is the Euler vector field. In this case, $\Sym \dual E = \Sym(T_{M/Y}^\vee[1]) \cong \Lambda T_{M/Y}^\vee$.  
We define a contraction operator  $\eta:\Gamma(M,\Lambda T_{M/Y}^\vee) \to \Gamma(M,\Lambda T_{M/Y}^\vee)$ by the formula 
$$\eta(\omega)= \int_0^1 \sigma^\ast d\omega\wedge \frac{dt}{t}\,,$$
where $\sigma:[0,1]\times M\to M$ is the multiplicative flow of $v$ in the tubular neighborhood $M$.  

\paragraph{Claim.} $[\eta,\iota_v]=\id - \rho^\ast\circ \iota^\ast$.  

Here, $\rho: \M \to Y$ is the projection, $\iota:Y\to \M$ is the inclusion morphism;  $\rho^\ast: C^\infty(Y) \to \Gamma(M,\Lambda T_{M/Y}^\vee)$ and  $\iota^\ast:\Gamma(M,\Lambda T_{M/Y}^\vee) \to C^\infty(Y)$ are the corresponding induced maps on function algebras.

The claim can be checked locally. So we may assume that we have a normal coordinate system $(x^i)$, as above.  In this case, the multiplicative flow is given by 
$$\sigma(t,x^1,\ldots, x^n)=(tx^1,\ldots tx^k,x^{k+1},\ldots,x^n)\,.$$
The claim follows from a straightforward verification.

Thus, $\big(C^\infty(Y), 0 \big)$ is indeed homotopy equivalent to $\big(\Gamma(M,\Lambda T_{M/Y}^\vee),\iota_v\big)$. This completes the proof.
\end{pf}

\subsection{Proof of Theorem~\ref{thm:WeakEqImplyQuasiIso}: general case}

Let $(M,L,\lambda)$ be an arbitrary $L_\infty$-bundle.  Suppose that $L^1$ is split into a direct sum $L^1=E\oplus H$, in such a way that the curvature $\lambda_0\in \Gamma(M,L^1)$ splits as $\lambda_0=u+s$, where $u$ is a regular section of $E$. Let $Y=Z(u)$ be the vanishing locus of $u$, and consider the induced morphism of $L_\infty$-bundles
$$(Y,H|_Y\oplus L^{\geq2}|_Y,\lambda)\longrightarrow (M,L,\lambda)\,.$$

\begin{lem}\label{former}
The restriction map $$\Gamma(M,\Sym L^\vee,Q_\lambda)\longrightarrow\Gamma\big(Y,\Sym(H^\vee|_Y\oplus (L^{\geq2})^{\vee}|_Y),Q_\lambda\big)$$ is a quasi-isomorphism.
\end{lem}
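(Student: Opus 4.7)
The plan is to bootstrap from Proposition~\ref{koszul} via the homological perturbation lemma. First, I would reduce to a tubular neighborhood $U$ of $Y$ in $M$. The key observation is that for $\alpha \in \Gamma(E^\vee)$ and any $Q_\lambda$-cocycle $\omega$, one has $\alpha(u)\,\omega = Q_\lambda(\alpha\omega)$, because $Q_\lambda(\alpha) = \alpha(\lambda_0) = \alpha(u)$: since $\lambda_k(\Sym^k L) \subset L^{\geq 2}$ for every $k \geq 1$, each higher operation $\alpha\circ\lambda_k$ vanishes, and $\alpha\in E^\vee$ kills the $H$-part $s$ of $\lambda_0$. Hence the ideal of $Y$ in $C^\infty(M)$ acts trivially on $H^\ast(\Gamma(M,\Sym L^\vee), Q_\lambda)$, making this cohomology a $C^\infty(Y)$-module; the bump-function argument used in the proof of Proposition~\ref{koszul} then shows that $\Gamma(M,\Sym L^\vee) \to \Gamma(U,\Sym L^\vee|_U)$ is a quasi-isomorphism. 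Thus I may assume $M$ is a tubular neighborhood of $Y$.

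On such a neighborhood, Proposition~\ref{koszul} supplies an explicit strong deformation retract of $(\Gamma(M,\Sym E^\vee),\iota_u)$ onto $(C^\infty(Y),0)$, with inclusion $\iota = \rho^\ast$, projection $\pi = i^\ast$, and chain homotopy $\eta$. Using the tensor decomposition $\Sym L^\vee = \Sym E^\vee \otimes_{\O_M} \Sym(H^\vee \oplus (L^{\geq 2})^\vee)$ and extending by the identity on the second factor, I obtain a strong deformation retract of $(\Gamma(M,\Sym L^\vee),\iota_u\otimes 1)$ onto $(\Gamma(Y,\Sym(H^\vee|_Y \oplus (L^{\geq 2})^\vee|_Y)), 0)$. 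I then perturb $\iota_u$ to $Q_\lambda = \iota_u + \delta$ with $\delta := Q_\lambda - \iota_u$. The crucial structural fact is that $\delta$ annihilates $E^\vee$ altogether: by the computation above, $Q_\lambda(\alpha) = \iota_u(\alpha)$ for every $\alpha \in E^\vee$. As a derivation, $\delta$ therefore never touches existing $E^\vee$-factors; it can only create new ones, through $\lambda_k^\vee$ acting on $(L^{\geq 2})^\vee$-factors. Consequently $\delta$ weakly preserves the $E^\vee$-degree while $\eta$ strictly raises it by one, so $\eta\delta$ strictly increases $E^\vee$-degree. Since $E \subset L^1$ sits in odd cohomological degree, $\Sym E^\vee = \Lambda E^\vee$ is capped at $\rk E$, so $(\eta\delta)^{\rk E + 1} = 0$, and the homological perturbation lemma applies.

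It remains to match the transferred data with the statement of the lemma. The perturbed retract has projection $\pi' = \pi(1-\delta\eta)^{-1}$ and differential $\tilde d = \pi\delta(1-\eta\delta)^{-1}\iota$. Every term $\pi(\delta\eta)^k$ with $k\geq 1$ vanishes because $\delta\eta$ lands in $E^\vee$-degree $\geq 1$, which $\pi$ annihilates by construction; the series collapses to $\pi' = \pi$ and $\tilde d = \pi\delta\iota$. A direct unfolding of $\pi\delta\iota$ applied to $\iota(\omega) = \rho^\ast\omega$ recovers, term by term, the interior product with $s|_Y$ and the operators $(\lambda|_Y)_k^\vee$ of the restricted $L_\infty$-bundle on $(Y, H|_Y \oplus L^{\geq 2}|_Y)$, so $\tilde d = Q_{\lambda|_Y}$ and $\pi$ is the desired restriction. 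I expect the main technical obstacle to be this last identification: one must verify carefully that $\lambda_k^\vee$ applied to a $(L^{\geq 2})^\vee$-factor, followed by projection away from $E^\vee$ and restriction to $Y$, produces exactly the restricted operator $(\lambda|_Y)_k^\vee$, which ultimately rests on the fact that $\lambda_k$ restricted to $\Sym^k(H \oplus L^{\geq 2})|_Y$ lands in $L^{\geq 2}|_Y$.
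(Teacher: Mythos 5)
Your proposal is correct in substance but follows a genuinely different route from the paper. The paper's proof is a short flat base change argument: writing $\tilde L=H\oplus L^{\geq2}$, it observes that $(\Sym E^\vee,\iota_u)$ is a dg subalgebra of $(\Sym L^\vee,Q_\lambda)$ over which $\Sym L^\vee$ is locally free, hence flat, and then tensors the Koszul quasi-isomorphism $\Sym E^\vee\to\O_Y$ of Proposition~\ref{koszul} with $\Sym L^\vee$ over $\Sym E^\vee$; it uses Proposition~\ref{koszul} only as a black box. You instead re-open its proof, extract the explicit contraction $(\rho^\ast,i^\ast,\eta)$, and feed it into the homological perturbation lemma with perturbation $\delta=Q_\lambda-\iota_u$. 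Your key structural observations are all correct and are exactly what makes this converge: $Q_\lambda\alpha=\alpha(u)$ for $\alpha\in E^\vee$, so $\delta$ kills $E^\vee$ and, being a derivation, never lowers the $E^\vee$-degree; $\eta$ raises that degree by one; $\Lambda E^\vee$ is bounded, so $(\eta\delta)^{\rk E+1}=0$; and the collapse $\pi(\delta\eta)^k=0$ for $k\geq1$ correctly identifies the transferred projection with the restriction map and the transferred differential with the restricted $Q_\lambda$. Your approach buys more than the paper's (an explicit homotopy inverse and homotopy, not merely the quasi-isomorphism), at the cost of a heavier verification burden. One step should not be waved through: ``extending $\eta$ by the identity on the second factor'' is not automatic, because $\eta$ is an integral operator along the flow $\sigma$, not an $\O_M$-linear bundle map, so $\eta\otimes1$ over $\O_M$ is undefined as written. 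You must first identify $\tilde L$ over the tubular neighborhood with $\rho^\ast(\tilde L|_Y)$ (possible after shrinking, e.g.\ by parallel transport along the fibers of $\rho$), so that $\sigma^\ast$ and the fiberwise de Rham differential act on $\Sym\tilde L^\vee$-valued forms; since any such identification is the identity over $Y$, the resulting projection is still the restriction map and the conclusion does not depend on the choice. With that patch (and either the side conditions for the contraction or a side-condition-free form of the perturbation lemma), your argument goes through.
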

\begin{pf}
Let us write $\tilde L=H\oplus L^{\geq2}$. 
By the decomposition $L = E \oplus \tilde L$ of graded vector bundles, we obtain an induced morphism of sheaves of differential graded algebras $\Sym E^\vee\to
\Sym L^\vee$ over $M$. Since $\Sym L^\vee$ is locally free over $\Sym E^\vee$, the morphism $\Sym E^\vee\to \Sym L^\vee$ is flat. Therefore the quasi-isomorphism $\Sym E^\vee\to \O_Y$ gives rise to another quasi-isomorphism
$$\Sym E^\vee\otimes_{\Sym E^\vee}\Sym L^\vee\longiso \O_Y\otimes_{\Sym E^\vee}\Sym L^\vee\,.$$
The left hand side is equal to $\Sym L^\vee$, and the right hand side is equal to $\O_Y\otimes_{\O_M}\Sym \tilde L^\vee$, proving that we have a quasi-isomorphism
$\Sym L^\vee\to \O_Y\otimes_{\O_M}\Sym \tilde L^\vee$. Hence the conclusion follows.
\end{pf}

\begin{prop}\label{prop:TransEmbedQiso}
Any transfer embedding $(U,H,\mu) \to (M,L,\lambda)$ is a quasi-isomorphism of $L_\infty$-bundles.
\end{prop}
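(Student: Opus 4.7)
By definition, a transfer embedding factors as
$$
(U,H,\mu)\xrightarrow{(\id_U,\phi)}\M|_U\xrightarrow{j}\M,
$$
where $j$ is the open inclusion (recall $U\supset\pi_0(\M)$) and $(\id_U,\phi)$ is the inclusion morphism supplied by the homotopy transfer theorem (Proposition~\ref{transfertheorem}). Since quasi-isomorphisms compose, it suffices to check that each factor is a quasi-isomorphism.

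For the open inclusion $j$, I would adapt the bump-function argument from the first Claim in the proof of Proposition~\ref{koszul}. Let $\chi\in C^\infty(M)$ be a cutoff with $\chi\equiv 1$ on a neighborhood of $\pi_0(\M)$ and $\supp\chi\subset U$. Because $L$ has strictly positive amplitude, the degree-one derivation $Q_\lambda$ vanishes on $C^\infty(M)\subset\Gamma(M,\Sym L^\vee)$; hence multiplication by $\chi$ commutes with $Q_\lambda$, and extension by zero produces a chain-level section of the restriction map. To promote this to a quasi-isomorphism, I would show that for any open $V\subset M$ disjoint from $\pi_0(\M)$ the complex $\bigl(\Gamma(V,\Sym L^\vee|_V),Q_\lambda\bigr)$ is acyclic. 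This follows from the nonvanishing of $\lambda_0$ on $V$: a local section $\alpha\in\Gamma(L^\vee|_V)$ with $\alpha\contract\lambda_0=1$ yields a local operator $m_\alpha$ (wedge with $\alpha$) whose graded commutator $[Q_\lambda,m_\alpha]=\id+(\text{higher})$ can be inverted by the geometric-series perturbation trick, and the local contractions are patched by a partition of unity.

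For the HPT inclusion $(\id_U,\phi)$, I would exploit the companion morphism $(\id_U,\tilde\pi):\M|_U\to(U,H,\mu)$ from Proposition~\ref{transfertheorem}, which satisfies $\tilde\pi\bullet\phi=\id_H$. Translating to dg algebras of global sections, the induced cdga morphisms $\phi^\sharp$ and $\tilde\pi^\sharp$ satisfy $\phi^\sharp\circ\tilde\pi^\sharp=\id$. Hence $\tilde\pi^\sharp\circ\phi^\sharp$ is an idempotent chain endomorphism, and the complex splits as
$$
\Gamma(U,\Sym L^\vee|_U)=\ker\phi^\sharp\;\oplus\;\tilde\pi^\sharp\bigl(\Gamma(U,\Sym H^\vee)\bigr).
$$
It therefore suffices to show that $\ker\phi^\sharp$ is acyclic.

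The technical heart of the proof is this acyclicity of $\ker\phi^\sharp$. I would construct a degree $-1$ operator $\tilde\eta$ on $\Gamma(U,\Sym L^\vee|_U)$ satisfying $[Q_\lambda,\tilde\eta]=\id-\tilde\pi^\sharp\circ\phi^\sharp$, by lifting the contraction $\eta$ of the complex $(L,\delta)$ used in the HPT; its restriction to $\ker\phi^\sharp$ is then a contracting homotopy. The construction is a standard perturbation-lemma calculation, carried out with respect to the natural filtration on $L$ (see Remark~\ref{varfil}) and terminating in finitely many steps because $L$ has finite amplitude and finite rank. An alternative approach is to run the spectral sequence for this filtration, whose $E_1$-page restricted to $\ker\phi^\sharp$ is acyclic thanks to the HPT's underlying strong deformation retract of complexes of vector bundles.
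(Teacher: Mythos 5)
Your overall architecture coincides with the paper's: factor the transfer embedding into an open restriction onto a neighborhood of the classical locus and the HPT inclusion $(\id,\phi)$, handle the restriction with a cutoff function, and reduce the HPT inclusion to inverting $\phi^\sharp$ up to homotopy using the one-sided inverse $\tilde\pi^\sharp$ coming from $\tilde\pi\bullet\phi=\id_H$. The restriction step is fine (the paper merely asserts it, by analogy with the first Claim in the proof of Proposition~\ref{koszul}); your acyclicity argument off the classical locus works, and in fact $[Q_\lambda,m_\alpha]=m_{Q_\lambda\alpha}=\id$ exactly, since $Q_\lambda\alpha-\alpha\contract\lambda_0$ lies in $(\Sym^{\geq1}L^\vee)^0=0$, so no geometric series is needed. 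The reduction of the HPT step to the acyclicity of $\ker\phi^\sharp$ via the idempotent $\tilde\pi^\sharp\phi^\sharp$ is also correct.

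The gap is in what you call the technical heart: the construction of $\tilde\eta$ with $[Q_\lambda,\tilde\eta]=\id-\tilde\pi^\sharp\phi^\sharp$ ``by lifting the contraction $\eta$.'' A contraction of the complex $(L,\delta)$ does \emph{not} lift to $\Sym L^\vee$ by the obvious derivation extension: if $\eta^{\mathrm{der}}$ denotes the derivation extending $\eta^\vee$, then $[\delta^\sharp,\eta^{\mathrm{der}}]$ is the derivation extension of $(\id_L-\iota\pi)^\vee$, which already on $\Sym^2 L^\vee$ differs from $\id-(\iota\pi)^\sharp$ (the former counts tensor factors hit by the projector, the latter is the algebra endomorphism). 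The necessary repair is the weight-renormalized ``tensor trick'' homotopy built from the coproduct --- precisely the operator $h=m\circ w\circ(\phi_1^\sharp\tilde\pi_1^\sharp\otimes\tilde\eta')\circ\Delta$ in the paper, where $w$ and $\tilde\eta'$ divide by binomial coefficients and by the weight. Moreover, even with this fix, establishing the homotopy identity against the full differential $Q_\lambda$ and the full nonlinear composite $\tilde\pi^\sharp\phi^\sharp$ would require re-running the perturbation lemma on the algebra side and checking that its output is dual to the $L_\infty$-data of Proposition~\ref{transfertheorem}; this compatibility is exactly what you assert as ``standard'' without proof, and it is strictly more than is needed. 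The paper sidesteps it by filtering $A=\Gamma(M,\Sym L^\vee)$ by polynomial weight, so that on the $E_1$-page only the linearized differential $\delta+q_1$ and the linear coefficients $\phi_1,\tilde\pi_1$ survive, where the explicit tensor-trick homotopy can be verified directly. Your ``alternative approach'' via a spectral sequence is therefore the right move, but the filtration should be by weight rather than the natural filtration on $L$, and the claim that $E_1$-acyclicity follows ``thanks to the underlying strong deformation retract'' still conceals the same renormalization issue.
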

\begin{proof}
 We assume that $H\to L$ is obtained by an application of the transfer theorem for bundles of
 curved $L_\infty[1]$-algebras as in Proposition~\ref{transfertheorem}. More precisely, let $(M,L,\delta)$  be a bundle of complexes endowed with the curved $L_\infty[1]$-structure $\lambda$, let $\eta$ be a contraction of $\delta$ and $(M,H,\delta)$ the bundle of complexes onto which $\eta$ contracts $(M,L,\delta)$.  Let $\mu$ be the family of transferred curved $L_\infty[1]$-structures on $(M,H,\delta)$ and $\phi:H\to L$ the inclusion morphism. 

Recall that a transfer embedding is a composition of an inclusion morphism $\phi$ obtained by the transfer theorem with the trivial fibration $(U,H,\mu) \hookrightarrow (M,H,\mu)$. Since the trivial fibration $(U,H,\mu) \hookrightarrow (M,H,\mu)$ is a quasi-isomorphism, it suffices to show that $(\id,\phi):(M,H,\mu) \to (M,L,\lambda)$ is a quasi-isomorphism.

Let $A=\Gamma(M,\Sym L^\vee)$   and $B=\Gamma(M,\Sym H^\vee)$.  Let us denote the derivations induced by the dual maps of $\delta$ and $\lambda_n$, $n \geq 0$, on $A$ by $\delta$ and $q_n$, $n \geq 0$, respectively. Similarly, denote the derivations induced by the dual maps of $\delta$ and $\mu_n$, $n \geq 0$, on $B$ by $\delta$ and $r_n$, $n \geq 0$, respectively.  

\paragraph{Claim.}
The morphism of function algebras
$$\phi^\sharp:(A,\delta+q)\longto (B,\delta+r)\,,$$
associated to the morphism of $L_\infty$-bundles $\phi:(M,H,\delta+\mu)\to (M,L,\delta+\lambda)$ is a quasi-isomorphism.

To prove the claim, refine the grading on $A$ to a double grading
$$A^{k,\ell}=\Gamma(M,\Sym^{-\ell}L^\vee)^{k+\ell}\,.$$
It is contained in the region defined by $\ell\leq0$ and $k\leq0$. Moreover $A$ has only finitely many non-zero terms for each fixed value of $k+\ell$. This will imply that the spectral sequences we construct below are bounded and hence convergent to the expected limit.

Note that $\delta$ and all $q_n$ are bigraded: the degree of $\delta$ is $(1,0)$, and the degree of $q_n$ is $(n,1-n)$. 

If we filter $A$ by $F_k A = A^{\geq k, \bullet}$, then the differential $\delta + q$ preserves this filtration, and we obtain a bounded spectral sequence $E_n^{k, \ell}$ converging to $H^{k+\ell}(A, \delta + q)$:
$$E_n^{k,\ell}\Longrightarrow H^{k+\ell}(A,\delta+q)\,.$$ 
The same construction applies to $(B,\delta+r)$, and we get a convergent spectral sequence
$$\tilde E_n^{k,\ell}\Longrightarrow H^{k+\ell}(B,\delta+r)\,.$$
The morphism of differential graded algebras $\phi^\sharp$ induces a morphism of spectral sequences $E\to \tilde E$, because $\phi$  respects the filtrations introduced above. 

To prove our claim, it suffices to show that $\phi^\sharp$ induces a quasi-isomorphism $E_1\to \tilde E_1$. 
The differential on $E_1$ is induced by $\delta+q_1$ on the cohomology of $E_0$ with respect to $q_0$. Similarly, the differential on $\tilde E_1$ is induced by $\delta+r_1$ on the cohomology of $\tilde  E_0$ with respect to $r_0$. The homomorphism $E_1\to \tilde E_1$ is induced by $\phi_1^\sharp$. 

Recall that from Proposition~\ref{transfertheorem}, we have the deformed projection $\tilde\pi_1:L\to H$.  It induces an algebra morphism $B\to A$, which we will denote by $\tilde\pi_1^\sharp$.  
Also there is the deformed contraction $\tilde\eta:L\to L$ of degree $-1$. 
We extend its dual to a fiberwise derivation $\Sym L^\vee \to \Sym L^\vee$, which we also denote by $\tilde\eta$ by abuse of notations. Denote by $\tilde\eta': \Sym L^\vee \to \Sym L^\vee$ the endomorphism obtained by dividing the derivation $\tilde\eta$ by the weight (and setting $\tilde\eta'(1)=0$).

We will now construct a fiberwise homotopy operator $h:\Sym L^\vee \to \Sym L^\vee$, with the property that 
\begin{equation}\label{heq}
[\delta+q_1, h]=1-\phi_1^\sharp\tilde\pi_1^\sharp\,.
\end{equation}
For this purpose, note that $\Sym L^\vee$ is a bundle of Hopf algebras. Let us denote the coproduct by $\Delta$, and the product by $m$. Define $h$ fiberwise by the formula:
$$h= m \circ w\circ(\phi_1^\sharp\tilde\pi_1^\sharp\otimes\tilde\eta')\circ \Delta\,.$$
Here $w:\Sym L^\vee \otimes \Sym L^\vee \to \Sym L^\vee \otimes \Sym L^\vee$ is the operator that divides an element of bi-weight $(k,\ell)$ by the binomial coefficient $\frac{(k+\ell)!}{k!\ell!}$. 
It follows from a direct verification that \eqref{heq} holds. Hence the map $\phi$ indeed induces a homotopy equivalence on $E_1$.  This concludes the proof.
\end{proof}

\begin{pfqiso}
By Proposition~\ref{prop:IsoToLinearMor} and Theorem~\ref{thm:main}, we may assume that $(M,L,\lambda)\to (N,L',\lambda')$ is a linear trivial fibration of $L_\infty$-bundles.  Then, according to Theorem~\ref{recap}, it suffices to prove that both $(Y,E,\nu)\to(U,H,\mu)$ and $(U,H,\mu)\to (M,L,\lambda)$ induce quasi-isomorphisms on the associated dg algebras of global sections. The former is done in Lemma~\ref{former}.  
The latter follows from Proposition~\ref{prop:TransEmbedQiso}.
\end{pfqiso}

\subsection{On the converse implication}

In this subsection, we give an elementary partial proof of the converse implication: if a morphism of $L_\infty$-bundles induces a quasi-isomorphism on the associated dg algebras of global sections, then it is a weak equivalence.

\begin{prop}\label{prop:QuasiIsoBijMC}
A quasi-isomorphism of $L_\infty$-bundles induces a bijection on classical loci.
\end{prop}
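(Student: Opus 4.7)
The plan is to show that the classical locus of an $L_\infty$-bundle is encoded purely in terms of the $\mathbb{R}$-algebra $H^0$ of the global sections of the dg algebra, so that a quasi-isomorphism automatically induces a bijection on classical loci.

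First, I would compute $H^0\big(\Gamma(M,\Sym L^\vee)\big)$ explicitly. Since $L$ is strictly positively graded, $L^\vee$ is strictly negatively graded, so $\Sym L^\vee$ is concentrated in non-positive degrees, with degree-$0$ part $C^\infty(M)$ and single degree-$(-1)$ piece $\Gamma(M,(L^1)^\vee)$. The restriction of $Q_\lambda$ to this degree-$(-1)$ piece is the dual of $\lambda_0:\Sym^0 L\to L^1$, namely $\alpha\mapsto\langle\alpha,\lambda_0\rangle$, because the higher operations $\lambda_k^\vee$ for $k\geq 1$ land in $\Sym^{\geq 1}L^\vee$ and can only contribute in strictly negative total degree. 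Consequently
\[
H^0\big(\Gamma(M,\Sym L^\vee)\big)\;=\;C^\infty(M)/I(\lambda_0),
\]
where $I(\lambda_0)\subset C^\infty(M)$ is the ideal generated by the components of the curvature $\lambda_0\in\Gamma(M,L^1)$.

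Next, I would show that $\pi_0(\M)$ is in natural bijection with the set of unital $\mathbb{R}$-algebra homomorphisms $H^0\big(\Gamma(M,\Sym L^\vee)\big)\to\mathbb{R}$. For $P\in\pi_0(\M)$, evaluation $\mathrm{ev}_P$ kills $I(\lambda_0)$ because $\lambda_0(P)=0$, and therefore descends to a character of $H^0$; injectivity of $P\mapsto\mathrm{ev}_P$ follows from the existence of smooth separating functions on a manifold. Surjectivity rests on the classical fact (Milnor) that every unital $\mathbb{R}$-algebra homomorphism $C^\infty(M)\to\mathbb{R}$ on a finite-dimensional Hausdorff paracompact manifold is evaluation at a unique point of $M$; any character of $H^0$ lifts to such a character and so corresponds to some $P\in M$, and the fact that the lift annihilates $I(\lambda_0)$ forces $\langle\alpha,\lambda_0(P)\rangle=0$ for all $\alpha$, i.e.\ $\lambda_0(P)=0$, so $P\in\pi_0(\M)$.

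Finally, I would conclude with a short naturality check. The morphism $(f,\phi):\M\to\N$ induces a dg algebra map whose degree-$0$ component is $f^\ast$, so at the level of characters the induced map sends $\mathrm{ev}_P\mapsto\mathrm{ev}_{f(P)}$; under the identification above this is exactly the restriction $f|_{\pi_0(\M)}:\pi_0(\M)\to\pi_0(\N)$. A quasi-isomorphism induces an $\mathbb{R}$-algebra isomorphism on $H^0$, hence a bijection of character sets, hence a bijection on classical loci. The one nontrivial input is the identification of $\mathbb{R}$-algebra characters of $C^\infty(M)$ with points of $M$; this is standard but essential, and is where the hypothesis that the base is a smooth manifold is really used.
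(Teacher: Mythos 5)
Your proposal is correct and follows essentially the same route as the paper: identify points of the base with $\mathbb{R}$-algebra characters of $C^\infty(M)$ (Milnor), deduce that the classical locus is the character set of $H^0$ of the global section dg algebra, and conclude since a quasi-isomorphism is an isomorphism on $H^0$. You merely spell out the computation $H^0 = C^\infty(M)/I(\lambda_0)$ that the paper leaves implicit.
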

\begin{pf}
Let $\M$ and $\N$ be $L_\infty$-bundles with base manifolds $M$ and $N$, respectively.  We denote by $A$ and $B$ the corresponding cdgas of global sections. Let $(f,\phi): \M \to \N$ be a morphism of $L_\infty$-bundles. Note that the set of points of $M$ is identified with the set of
$\rr$-algebra morphisms $A^0\to \rr$, where $A^0 = C^\infty(M)$. See \cite[Problem 1-C]{MR0440554}.  It follows that the set of
classical points of $\M$ is equal to the set of algebra morphisms
$H^0(A)\to\rr$. Similarly, the set of classical points of $\N$ is the
set of algebra morphisms $H^0(B)\to \rr$.  Since the quasi-isomorphism
$B\to A$ induces an isomorphism $H^0(B)\to H^0(A)$, it follows that $f$
induces a bijection $\pi_0(\M)\to\pi_0(\N)$ on classical loci.
\end{pf}

\begin{prop}\label{prop:QuasiIsoWkEqFib}
If a fibration of quasi-smooth $L_\infty$-bundles induces a quasi-isomorphism of the associated dg algebras of global sections, then it is a weak equivalence.
\end{prop}
\begin{pf}
We are in the quasi-smooth case. So $L=L^1$, and $E=E^1$.  By assumption, the following three-term sequence is exact on the right:
\begin{equation}\label{right}
\begin{split}
\xymatrix{
\Gamma(M,\Lambda^2 L^\vee)\rto^{\contract \lambda} & \Gamma(M,L^\vee)\rto^{\contract\lambda} & C^\infty(M)\\
&\Gamma(N,E^\vee)\rto^{\contract \mu}\uto & C^\infty(N)\uto}
\end{split}
\end{equation}
As a formal consequence, the following diagram is exact on the left:
\begin{equation}\label{left}
\vcenter{\xymatrix{
C^\infty(M)^\ast\rto\dto & \Gamma(M,L^\vee)^\ast\rto\dto & \Gamma(M,\Lambda^2 L^\vee)^\ast\\
C^\infty(N)^\ast\rto & \Gamma(N,E^\vee)^\ast}}
\end{equation}
Here the asterisks denote $\rr$-linear maps to $\rr$. 
Our claim is that for $P\in Z(\lambda)\subset M$, the square
\begin{equation}\label{squr}
\vcenter{\xymatrix{TM|_P\rto\dto &L|_P\dto\\
TN|_{f(P)}\rto & E|_{f(P)}}}\end{equation}
is exact. By our assumption that $(f,\phi)$ is a fibration, the vertical maps are surjective, so we need to prove that (\ref{squr}) induces a bijection on kernels. The point $P$ defines a $C^\infty(M)$-module structure on $\rr$, and the point $f(P)$ defines a $C^\infty(N)$-module structure on $\rr$.  The vector space 
$$TM|_P\subset C^\infty(M)^\ast$$
consists of the linear maps $C^\infty(M)\to\rr$ which are derivations.  The vector space 
$$L|_P\subset\Gamma(M,L^\vee)^\ast$$
consists of the $C^\infty(M)$-linear maps $\Gamma(M,L^\vee)\to\rr$. 

Consider an element $x\in L|_P$, mapping to zero in $E|_{f(P)}$. The element $x$ maps to zero in $\Gamma(M,\Lambda^2L^\vee)^\ast$, because 
the map $L|_P\to \Lambda^2 L|_P$ induced by multiplication by $\lambda$ is zero, as $\lambda$ vanishes at $P$.  Comparing with (\ref{left}), we see that there exists a unique $\rr$-linear map $v:C^\infty(M) \to\rr$, such that 
\begin{items}
\item $v\circ f^\sharp:\Gamma(N)\to\rr$ vanishes;
\item $v\circ(\contract\lambda):\Gamma(L^\vee)\to \rr$ is equal to $x$.
\end{items}
We need to show that $v$ is a derivation, i.e.\  for $g,h\in C^\infty(M)$, we have
$$v(gh)=v(g)h+gv(h)\,.$$
By the right exactness of \eqref{right}, the vector space $C^\infty(M)$ is generated as an $\rr$-vector space by the images of $C^\infty(N)$ and $\Gamma(M,L^\vee)$. Thus, it suffices to consider the following two cases.

{\bf Case 1.} Assume both $g,h$ are pullbacks from $N$. So there exist $\tilde g,\tilde h:N\to\rr$, such that $g=\tilde g\circ f$ and $h=\tilde h\circ f$. We have
$$v(gh)=v((\tilde g\circ f)(\tilde h\circ f))=v((\tilde g\tilde h)\circ f)=0=v(\tilde g\circ f)h+gv(\tilde h\circ f)=v(g)h+gv(h)\,.$$

{\bf Case 2.} Assume   that $h=\alpha\contract\lambda$, where $\alpha\in \Gamma(M,L^\vee)$. We have
$$v(gh)= v(g(\alpha\contract\lambda))=v((g\alpha)\contract\lambda)
=x(g\alpha)=g(P)x(\alpha)\,,$$
and
$$v(g)h+gv(h)=v(g)(\alpha\contract\lambda)(P)+g(P) v(\alpha\contract\lambda)=0+g(P)x(\alpha)\,.$$
This finishes the proof.
\end{pf}

\bibliographystyle{plain}
\bibliography{ref_CatFibObj}

\Addresses

\end{document}